\newtheorem{Def}{Definition}
\newtheorem*{Def*}{Definition}
\newtheorem{thm}{Theorem}
\newtheorem*{thm*}{Theorem}
\newtheorem{thmA}{Theorem}
\newtheorem{lem}{Lemma}[section]
\newtheorem{prop}{Proposition}[section]
\theoremstyle{remark}
\theoremstyle{definition}
\newtheorem{ex}{Example}[section]
\crefname{theorem}{Theorem}{Theorems}
\crefname{lemma}{Lemma}{Lemmas}
\crefname{proposition}{Proposition}{Propositions}
\newcommand{\Lip}{\operatorname{Lip}}
\newcommand{\lip}{\operatorname{lip}}
\newcommand{\ran}{\operatorname{ran}}
\begin{document}
%%%%%%%%%%%%%%%%%%%%%%%%%%%%%%%%%%%%%%%%%%%%%%%%%%%%%%%%%%%%%%
	
%%%%%%%%%%%%%%%%%%%%%%%%%%%%%%%%%%%%%%%%%%%%%%%%%%%%%%%%%%%%%% begin title and author	
	\title[]
	{On the second dual space of Banach space of vector-valued little Lipschitz functions}
	
	\author{Shinnosuke Izumi}
	
	\address{Department of Mathematics, Shinshu University, Matsumoto 390-8621, Japan}
	
	\email{izumi@math.shinshu-u.ac.jp}
	
	\keywords{little Lipschitz function, second dual space}
	
	\subjclass[2010]{Primary 46B10; Secondary 46E40}
%%%%%%%%%%%%%%%%%%%%%%%%%%%%%% begin abst
\begin{abstract}
	Let \(X\) be a compact metric space 
	and \(E\) be a Banach space. 
	\(\lip (X, E)\) denotes the Banach space of all \(E\)-valued little Lipschitz functions on \(X\).  
	We show that 
	\(\lip (X, E)^{**}\) 
	is isometrically isomorphic to 
	Banach space of \(E^{**}\)-valued Lipschitz functions \(\Lip(X, E^{**})\) 
	under several conditions. 
	Moreover, 
	we describe the isometric isomorphism from \(\lip (X, E)^{**}\) to \(\Lip (X, E^{**})\).  
\end{abstract}
%%%%%%%%%%%%%%%%%%%%%%%%%%%%%% end abst
	\maketitle
%%%%%%%%%%%%%%%%%%%%%%%%%%%%%%%%%%%%%%%%%%%%%%%%%%%%%%%%%%%%%% end title

%%%%%%%%%%%%%%%%%%%%%%%%%%%%%%%%%%%%%%%%%%%%%%%%%%%%%%%%%%%%%% begin introduction
\section{Introduction}
Let \(X\) be a compact metric space with metric \(d\), 
\(\mathbb{K}\) be a field of real number or complex number,  
\(E\) be a Banach space over \(\mathbb{K}\) with norm \(\| \cdot \|_{E}\) 
and \(0 < \alpha \le 1\) .
\(C(X, E)\) denotes the Banach space of all \(E\)-valued continuous functions on \(X\) 
with norm
\[
\| f \|_{C(X, E)} 
\coloneqq  
\sup_{x \in X} \| f(x) \|_{E}.
\]
If \(f \in C(X, E)\) satisfies the condition 
\[
\mathcal{L}_{X, E} (f) 
\coloneqq 
\sup_{\begin{subarray}{c} x, x' \in X \\ x \neq x'\end{subarray}} 
\frac{\|f(x) - f(x')\|_{E}}{d (x, x')^{\alpha}} 
< \infty, 
\]
then we say that \(f\) is \textsl{\(\alpha\)-Lipschitz}. 
In particular, 
if \(\alpha = 1\), 
then we say that \(f\) is \textsl{Lipschitz}.
\(\Lip_{\alpha} (X, E)\) denotes the set of all \(E\)-valued \(\alpha\)-Lipschitz functions on \(X\).
If \(f \in \Lip_{\alpha} (X, E)\) satisfies the condition 
\[
\lim_{d (x, x')^{\alpha} \rightarrow 0} 
\frac{\|f(x) - f(x')\|_{E}}{d (x, x')^{\alpha}} = 0, 
\]
then we say that \(f\) is \textsl{little \(\alpha\)-Lipschitz}. 
In particular, 
if \(\alpha = 1\), 
then we say that \(f\) is \textsl{little Lipschitz}. 
\(\lip_{\alpha} (X, E)\) denotes the set of all \(E\)-valued little \(\alpha\)-Lipschitz functions on \(X\).
In case that \(E = \mathbb{K}\), 
we simply write \(\Lip_{\alpha} (X) \coloneqq \Lip_{\alpha} (X, \mathbb{K})\) 
and \(\lip_{\alpha} (X) \coloneqq \lip_{\alpha} (X, \mathbb{K})\), respectively.
In case that \(\alpha = 1\), 
we simply write \(\Lip (X, E) \coloneqq \Lip_{1} (X, E)\) 
and \(\lip (X, E) \coloneqq \Lip_{1} (X, E)\), respectively.

Clearly, 
\(\Lip_{\alpha}(X, E)\) is a Banach space over \(\mathbb{K}\) with respect to the norm
\[
\|f\|_{\textrm{max}} 
\coloneqq  
\max\left\{\|f\|_{C(X, E)}, \mathcal{L}_{X, E}(f)\right\} \qquad (f \in \Lip(X, E)), 
\]
and \(\lip_{\alpha}(X, E)\) is a closed subspace of \(\Lip_{\alpha}(X, E)\). 
Moreover, 
if \(0 < \alpha < 1\), 
then \(\Lip(X, E) \subset \lip_{\alpha}(X, E)\). 

For any Banach space \(B\), 
\(B^{*}\) and \(B^{**}\) denote the dual space and second dual space of \(B\), 
respectively. 
Also, 
\(b^{*}\) and \(b^{**}\) denote the elements of \(B^{*}\) and \(B^{**}\), 
respectively. 

N. Weaver defined in \cite{We} the following property 
to get a Stone-Weierstrass type theorem. 

\begin{Def}[Weaver\cite{We}]
	Let \(X\) be a compact metric space with metric \(d\). 
	We say that \(\lip (X)\) \textsl{separates points uniformly} 
	if there exists a constant \(c > 1\) 
	such that 
	for any \(x, x' \in X\) with \(x \neq x'\), 
	some \(f \in \lip (X)\) 
	satisfies \(\mathcal{L}_{X, \mathbb{K}}(f) \le c\) 
	and \(|f (x) - f (x')| = d (x, x')\).  
\end{Def}

In \cite{BCD}, 
W. G. Bade, P. C. Curtis Jr. and H. G. Dales showed that 
\(\lip_{\alpha} (X)\) separates points uniformly for \(0 < \alpha <1\). 

L. G. Hanin \cite{Han} and N. Weaver \cite{We} showed following theorem: 

\begin{thmA}[Hanin\cite{Han} and Weaver\cite{We}]\label{ThmHW}
	Let \(X\) be a compact metric space. 
	The following are equivalent: 
	\begin{enumerate}
		\renewcommand{\labelenumi}{(\alph{enumi})}
		\item 
		\(\lip (X)\) separates points uniformly. 
		
		\item 
		There exists \(b > 1\) 
		such that 
		for any \(g \in \Lip (X)\) and any finite subset \(S \subset X\), 
		some \(f \in \lip (X)\) satisfies \(\mathcal{L}_{X, \mathbb{K}} (f) \le b \mathcal{L}_{X, \mathbb{K}}(g)\) 
		and \(f (x) = g (x)\) for \( x \in S\). 
		
		\item 
		\(\lip (X)^{**}\) is isometrically isomorphic to \(\Lip (X)\), 
		via the mapping \(\Lambda : \lip (X)^{**} \rightarrow \Lip (X)\) defined by 
		\begin{align}
		(\Lambda f^{**})(x) = f^{**} (\tau_{x}) \qquad \bigl(f^{**} \in \lip (X)^{**}, x \in X\bigr),
		\end{align}
		where 
		\(\tau_{x}\) is evaluation functional at \(x\).
	\end{enumerate}
\end{thmA}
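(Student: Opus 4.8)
The plan is to prove the cycle by first isolating the analytic core as a finite interpolation lemma, reading off the bidual identification from it, and then checking that the two return implications are soft. Throughout I write $\tau_x \in \lip(X)^*$ for evaluation at $x$, so $\|\tau_x\| \le 1$ and $\|\tau_x - \tau_{x'}\|_{\lip(X)^*} \le d(x,x')$, and I set $m_{x,x'} \coloneqq (\tau_x - \tau_{x'})/d(x,x') \in \lip(X)^*$. Two observations are immediate and serve as the ``easy half'' of the isometry. First, for $f^{**} \in \lip(X)^{**}$ one has $|(\Lambda f^{**})(x)| = |f^{**}(\tau_x)| \le \|f^{**}\|$ and $|(\Lambda f^{**})(x) - (\Lambda f^{**})(x')| = |f^{**}(\tau_x - \tau_{x'})| \le \|f^{**}\|\, d(x,x')$, so $\Lambda$ does map into $\Lip(X)$ with $\|\Lambda f^{**}\|_{\mathrm{max}} \le \|f^{**}\|$. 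Second, $\Lambda \circ \kappa$ is exactly the inclusion $\lip(X) \hookrightarrow \Lip(X)$, where $\kappa$ is the canonical embedding of $\lip(X)$ into its bidual. It is worth recording the structural description $\Lambda = V^{*}$, where $V \colon \mathcal{A} \to \lip(X)^*$ is restriction to $\lip(X)$ of the functionals in the canonical (Arens--Eells type) predual $\mathcal{A}$ of $(\Lip(X), \|\cdot\|_{\mathrm{max}})$; this explains why $\Lambda$ is weak$^*$-continuous and makes (c) equivalent to the statement that $V$ is a surjective isometry.

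The heart of the matter is the following interpolation lemma, which I would derive from (a): for every finite $S \subset X$, every $g \in \Lip(X)$, and every $\varepsilon > 0$ there is $f \in \lip(X)$ with $f|_S = g|_S$ and $\|f\|_{\mathrm{max}} \le (1+\varepsilon)\,\|g\|_{\mathrm{max}}$. To prove it I would assemble the uniformly separating functions provided by (a) into an interpolant matching the finitely many prescribed values, the issue being to control the resulting Lipschitz constant. A naive combination only yields a constant of the order of the separation constant $c$; recovering the almost-isometric bound $(1+\varepsilon)$ is the step I expect to be the main obstacle. I would handle this by a multiscale/telescoping construction in which the separation overhead is distributed across the dyadic scales of the distances occurring within $S$ and then averaged, so that the effective distortion tends to $1$. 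In particular the lemma already gives (a) $\Rightarrow$ (b) (choose $\varepsilon$ with $1+\varepsilon \le b$).

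Granting the lemma, I would prove (a) $\Rightarrow$ (c) in two steps. For surjectivity with the reverse norm bound, fix $g \in \Lip(X)$; applying the lemma along the net of pairs $(S,\varepsilon)$ (ordered by $S \subseteq S'$ and $\varepsilon \ge \varepsilon'$) produces $f_{S,\varepsilon} \in \lip(X)$ with $\|f_{S,\varepsilon}\|_{\mathrm{max}} \le (1+\varepsilon)\|g\|_{\mathrm{max}}$ and $f_{S,\varepsilon}(x) = g(x)$ once $x \in S$. A weak$^*$ cluster point $f^{**}$ of $\kappa(f_{S,\varepsilon})$ then satisfies $\|f^{**}\| \le \|g\|_{\mathrm{max}}$ (weak$^*$ lower semicontinuity of the norm, as $\varepsilon \to 0$ cofinally) and $f^{**}(\tau_x) = g(x)$ for every $x$, i.e.\ $\Lambda f^{**} = g$; with the easy bound this forces $\|f^{**}\| = \|g\|_{\mathrm{max}}$. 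For injectivity I would invoke the de~Leeuw representation: the little Lipschitz condition ensures that the difference-quotient map $f \mapsto \bigl((x,x') \mapsto (f(x)-f(x'))/d(x,x')\bigr)$ extends continuously to a suitable compactification $\widetilde{W}$ of the off-diagonal together with $X$, embedding $\lip(X)$ isometrically into $C(\widetilde{W})$, so that every $\phi \in \lip(X)^*$ is represented by a measure on $\widetilde{W}$, namely as an integral of the maps $x \mapsto \tau_x$ and $(x,x') \mapsto m_{x,x'}$. Since $x \mapsto \tau_x$ is norm-continuous ($\|\tau_x - \tau_y\| \le d(x,y)$), these are genuine Bochner integrals in $\lip(X)^*$, through which $f^{**}$ passes; hence if $f^{**}(\tau_x) = 0$ for all $x$, then also $f^{**}(m_{x,x'}) = 0$ and $f^{**}(\phi) = 0$ for every $\phi$, so $f^{**} = 0$. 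Combining injectivity with the metric surjectivity above and $\|\Lambda\| \le 1$ shows that $\Lambda$ is an isometric isomorphism given by the stated formula.

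Finally the return implications are soft. For (b) $\Rightarrow$ (a), apply (b) to $g = d(\,\cdot\,, x')$, which has $\mathcal{L}_{X,\mathbb{K}}(g) = 1$ and $g(x) - g(x') = d(x,x')$, with $S = \{x,x'\}$: the resulting $f \in \lip(X)$ has $\mathcal{L}_{X,\mathbb{K}}(f) \le b$ and $|f(x) - f(x')| = d(x,x')$, so $\lip(X)$ separates points uniformly with $c = b$. For (c) $\Rightarrow$ (a), take the same $g$ and set $f^{**} = \Lambda^{-1} g$, so $\|f^{**}\| = \|g\|_{\mathrm{max}} \le \max\{\operatorname{diam} X, 1\}$ uniformly in $(x,x')$; by Goldstine there is a net $f_\alpha \in \lip(X)$ with $\|f_\alpha\|_{\mathrm{max}} \le \|f^{**}\|$ and $\kappa(f_\alpha) \to f^{**}$ weak$^*$, whence $f_\alpha(x) - f_\alpha(x') \to d(x,x')$, so some $f_\alpha$ separates $x,x'$ to within $\varepsilon$; a harmless rescaling makes the value exact at the cost of enlarging the uniform constant slightly. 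This closes the equivalences $\mathrm{(a)} \Leftrightarrow \mathrm{(b)}$ and $\mathrm{(a)} \Leftrightarrow \mathrm{(c)}$, the only substantial work being the interpolation lemma and the de~Leeuw representation underlying injectivity.
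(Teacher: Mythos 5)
A preliminary remark: the paper does not prove Theorem \ref{ThmHW} at all --- it is quoted from Hanin and Weaver with citations --- so there is no in-paper proof to compare against, and your proposal has to be judged on its own terms. Its overall architecture is the standard one and is sound: the bound \(\|\Lambda f^{**}\|_{\mathrm{max}} \le \|f^{**}\|\) is correct; injectivity of \(\Lambda\) does follow from your de~Leeuw/Bochner-integral argument (or, more cheaply, from the density of the linear span of \(\{\tau_x\}\) in \(\lip(X)^*\), which the paper itself invokes from \cite{Joh}); the passage from the interpolation lemma to surjectivity via weak\(^*\) cluster points is correct, and your observation that injectivity upgrades ``every \(g\) has a norm-preserving preimage'' to ``\(\Lambda\) is an isometry'' is exactly right; and the two return implications (b)\(\Rightarrow\)(a) and (c)\(\Rightarrow\)(a) via \(g = d(\cdot, x')\), Goldstine, and rescaling are fine.

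The genuine gap is the one you yourself flag: the interpolation lemma (finite interpolation of \(g \in \Lip(X)\) by \(f \in \lip(X)\) with \(\|f\|_{\mathrm{max}} \le (1+\varepsilon)\|g\|_{\mathrm{max}}\)) is the entire analytic content of (a)\(\Rightarrow\)(b) and of the surjectivity of \(\Lambda\), and ``a multiscale/telescoping construction in which the separation overhead is distributed across dyadic scales and then averaged'' is a heuristic, not an argument. The known proof has two concrete steps that your sketch does not supply: first, one shows that uniform separation with some constant \(c\) self-improves to uniform separation with constant \(1+\varepsilon\) for every \(\varepsilon>0\) (this is done by truncating a separating function to a band of values and using a coarea/pigeonhole argument over the bands --- the truncations all have \(\mathcal{L} \le c\), their oscillations between \(x\) and \(x'\) sum to \(d(x,x')\), so some band nearly achieves slope \(1\) after rescaling); second, one assembles the \((1+\varepsilon)\)-separating functions into an interpolant by a finite max--min (McShane-type) formula, using that \(\lip(X)\) is a sublattice, and then truncates at \(\pm\|g\|_{C(X)}\) to control the sup norm. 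Without these two steps the proof is incomplete at its core; everything else in your write-up is correctly reduced to this lemma.
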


Also, 
J. A. Johnson \cite[Theorem 4.7]{Joh}, 
and W. G. Bade, P. C. Curtis Jr. and H. G. Dales \cite[Theorem 3.5]{BCD} 
showed that 
\(\lip_{\alpha} (X)^{**}\) 
is isometrically isomorphic to \(\Lip_{\alpha} (X)\). 

For vector-valued case, 
J. A. Johnson took up in \cite{Joh} the Banach space \(\lip_{\alpha}(X, E)\) 
and proved the following theorem:  

\begin{thmA}[Johnson{\cite{Joh}}]\label{ThmJ}
	Let \(X\) be a compact metric space with metric \(d\), 
	E be a Banach space 
	and 
	\(0 < \alpha < 1\). 
	If either \(E^{*}\) or \(\lip_{\alpha} (X)^{*}\) has approximation property, 
	then \(\lip_{\alpha} (X, E)^{**}\) is isometrically isomorphic to \(\Lip_{\alpha} (X, E^{**})\). 
\end{thmA}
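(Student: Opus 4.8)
The plan is to realize the vector-valued space as a tensor product and then dualize twice, reducing everything to the scalar identification $\lip_\alpha(X)^{**}\cong\Lip_\alpha(X)$ recalled above together with the approximation-property hypothesis. Write $P\coloneqq\lip_\alpha(X)^{*}$, so that the scalar theorem gives $P^{*}=\lip_\alpha(X)^{**}\cong\Lip_\alpha(X)$ with the pairing being evaluation, $\langle g,\tau_x\rangle=g(x)$ for $g\in\Lip_\alpha(X)$. I would aim for the chain of isometric isomorphisms
\begin{align*}
\lip_\alpha(X,E)&\cong\lip_\alpha(X)\otimes_\varepsilon E,\\
\lip_\alpha(X,E)^{*}&\cong P\otimes_\pi E^{*},\\
\lip_\alpha(X,E)^{**}&\cong\mathcal{L}(E^{*},\Lip_\alpha(X))\cong\Lip_\alpha(X,E^{**}),
\end{align*}
where $\otimes_\varepsilon$ and $\otimes_\pi$ denote the completed injective and projective tensor products and $\mathcal{L}(\,\cdot\,,\,\cdot\,)$ the bounded operators. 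The approximation property enters precisely in the middle isomorphism.

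First I would prove the lemma $\lip_\alpha(X,E)\cong\lip_\alpha(X)\otimes_\varepsilon E$ isometrically. The algebraic tensor product embeds by $\sum_i g_i\otimes e_i\mapsto\bigl(x\mapsto\sum_i g_i(x)e_i\bigr)$, and on finite-rank functions the injective norm agrees with $\|\cdot\|_{\max}$ by testing against functionals $\mu\otimes e^{*}$ with $\mu\in\lip_\alpha(X)^{*}$, $e^{*}\in E^{*}$. The substantive point is density: every $f\in\lip_\alpha(X,E)$ must be approximated in $\|\cdot\|_{\max}$ by finite-rank little $\alpha$-Lipschitz functions. Here $0<\alpha<1$ is used through the fact that a little $\alpha$-Lipschitz function has $\alpha$-Hölder oscillation tending to $0$ at small scales, so a finite-net/partition-of-unity argument on the compact space $X$ yields finite-rank approximants controlling the sup-norm and the $\alpha$-Lipschitz seminorm simultaneously; this step is unconditional and does not require the approximation property.

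Next I would dualize. By Grothendieck's duality theory the canonical map $P\otimes_\pi E^{*}\to(\lip_\alpha(X)\otimes_\varepsilon E)^{*}$ is norm-decreasing, and the approximation property of $P$ or of $E^{*}$ is what upgrades it to an isometric isomorphism onto the whole dual, giving $\lip_\alpha(X,E)^{*}\cong P\otimes_\pi E^{*}$. Taking one more dual and using $(P\otimes_\pi E^{*})^{*}=\mathcal{L}(E^{*},P^{*})=\mathcal{L}(E^{*},\Lip_\alpha(X))$ yields
\[
\lip_\alpha(X,E)^{**}\cong\mathcal{L}(E^{*},\Lip_\alpha(X)).
\]
Finally I would identify $\mathcal{L}(E^{*},\Lip_\alpha(X))$ with $\Lip_\alpha(X,E^{**})$ by sending $T$ to the function $F$ with $\langle F(x),e^{*}\rangle=(Te^{*})(x)$: that $F(x)\in E^{**}$, that $F$ is $\alpha$-Lipschitz, and that $\|F\|_{\max}=\|T\|$ all follow by taking suprema over the unit ball of $E^{*}$ and using $\langle g,\tau_x\rangle=g(x)$ together with $\|\tau_x\|\le 1$. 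Tracing the composite shows the isomorphism $\Phi\colon\lip_\alpha(X,E)^{**}\to\Lip_\alpha(X,E^{**})$ is given explicitly by $(\Phi f^{**})(x)=F(x)$ with $\langle F(x),e^{*}\rangle=f^{**}(\tau_x\otimes e^{*})$, where $\tau_x\otimes e^{*}\in\lip_\alpha(X,E)^{*}$ is the functional $f\mapsto\langle f(x),e^{*}\rangle$; this is the exact vector-valued analogue of the scalar map $\Lambda$.

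The main obstacle is the middle step, the isometric identification $(\lip_\alpha(X)\otimes_\varepsilon E)^{*}\cong\lip_\alpha(X)^{*}\otimes_\pi E^{*}$. The delicate issue is the classical gap between nuclear and integral forms: the dual of an injective tensor product is always the space of integral forms, whereas the projective tensor product of the duals surjects only onto the nuclear forms. One must therefore verify that, under the approximation property of $P=\lip_\alpha(X)^{*}$ or of $E^{*}$, the canonical map is injective with norm-preserving range filling the entire dual—equivalently, that nuclear and integral forms coincide isometrically for these spaces. Establishing this, and confirming the density lemma of the first step in $\|\cdot\|_{\max}$, is where the real work lies; the two outer identifications are then essentially formal.
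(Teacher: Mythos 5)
Your overall architecture is the same as the paper's: realize $\lip_\alpha(X,E)$ as $E\widehat{\otimes}_\varepsilon\lip_\alpha(X)$, identify the dual with $E^{*}\widehat{\otimes}_\pi\lip_\alpha(X)^{*}$, pass to $\mathscr{B}(E^{*},\Lip_\alpha(X))\cong\Lip_\alpha(X,E^{**})$, and read off the formula $\bigl((\Phi f^{**})(x)\bigr)(e^{*})=f^{**}(e^{*}\otimes\tau_{x})$ (this is exactly Lemmas \ref{Lem3-1}--\ref{Lem3-5} specialized as in \Cref{Ex1}). However, two of your steps have genuine gaps. The first is the density step, which you declare ``unconditional'' and propose to settle by a finite net and a partition of unity. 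Writing $g=\sum_j\varphi_jf(x_j)$, one has
\begin{align}
(f-g)(x)-(f-g)(y)=(f(x)-f(y))+\sum_j(\varphi_j(x)-\varphi_j(y))(f(x)-f(x_j)),
\end{align}
and to control the second sum divided by $d(x,y)^{\alpha}$ you need $\sum_j|\varphi_j(x)-\varphi_j(y)|\lesssim d(x,y)/\delta$ with a constant independent of $\delta$, i.e.\ a bounded covering multiplicity. A general compact metric space is not doubling, so the estimate picks up a factor depending on the cardinality $N(\delta)$ of the net and the argument does not close. The paper (following Johnson) avoids a direct approximation in $\|\cdot\|_{\textrm{max}}$ altogether: surjectivity of $U$ is obtained by showing $U^{*}$ is an isometry, which rests on the density of $E^{*}\otimes\{\tau_x\}_{x\in X}$ in $\lip(X,E)^{*}$ (\Cref{Prop2-4}, via \Cref{Prop2-5,Prop2-6}), proved by embedding $\lip(X,E)$ into $C_0(K,E)$ for the de Leeuw-type space $K$ and representing functionals by vector measures. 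That dual-space density is where the real work lies, and it is not the statement you propose to prove.

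The second gap is in the middle duality. The isometric identification $\bigl(E\widehat{\otimes}_\varepsilon\lip_\alpha(X)\bigr)^{*}\cong E^{*}\widehat{\otimes}_\pi\lip_\alpha(X)^{*}$ does not follow from the approximation property alone: the dual of the injective tensor product is the space of integral forms, and to conclude that every integral form is nuclear (with equal norm) one needs, in addition to the approximation property, the Radon--Nikod\'ym property of one of the duals; this is precisely the hypothesis of the result the paper invokes in \Cref{Lem3-1} (\cite[Theorem 5.33]{Rya}). In the setting of the statement this ingredient is available --- for $0<\alpha<1$ the space $\lip_\alpha(X)$ separates points uniformly, hence $\lip_\alpha(X)^{*}$ is separable and has the Radon--Nikod\'ym property (\Cref{Prop2-3}) --- but it must be supplied explicitly; the approximation property by itself does not upgrade the canonical norm-decreasing map to an isometry onto the whole dual.
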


In this paper, 
we take up the Banach space \(\lip (X, E)\) and get the following result:  

\begin{thm}\label{thm1}
	Let E be a Banach space 
	and  
	\(X\) be a compact metric space with metric \(d\). 
	If \(\lip(X)\) separates points uniformly 
	and 
	either \(E^{*}\) or \(\lip(X)^{*}\) has approximation property, 
	then \(\lip (X, E)^{**}\) isometrically isomorphic to \(\Lip(X, E^{**})\).
\end{thm}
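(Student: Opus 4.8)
The plan is to reduce the vector-valued statement to the scalar Theorem~\ref{ThmHW} by means of tensor products, exploiting the approximation property to pass between injective and projective tensor products and their duals.

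First I would identify $\lip(X, E)$ with a completed injective tensor product. For $g \in \lip(X)$ and $e \in E$ the function $x \mapsto g(x)e$ lies in $\lip(X, E)$, and the norm $\|\cdot\|_{\mathrm{max}}$ restricted to the algebraic tensor product $\lip(X) \otimes E$ agrees with the injective tensor norm: for $u = \sum_{i} g_{i} \otimes e_{i}$, writing $f_{u}(x) = \sum_{i} g_{i}(x) e_{i}$ and letting $B_{\,\cdot\,}$ denote the closed unit ball, one computes
\begin{align*}
\|u\|_{\varepsilon}
&= \sup_{\begin{subarray}{c} \varphi \in B_{\lip(X)^{*}} \\ \psi \in B_{E^{*}} \end{subarray}} \Bigl| \sum_{i} \varphi(g_{i})\, \psi(e_{i}) \Bigr|
= \sup_{\psi \in B_{E^{*}}} \| \psi \circ f_{u} \|_{\mathrm{max}}
= \| f_{u} \|_{\mathrm{max}},
\end{align*}
where the middle equality uses $\sum_{i}\varphi(g_{i})\psi(e_{i}) = \varphi(\psi \circ f_{u})$ and the last uses $\sup_{\psi}\max\{a(\psi), b(\psi)\} = \max\{\sup_{\psi} a, \sup_{\psi} b\}$ together with $\sup_{\psi}\|\psi\circ f\|_{C(X)} = \|f\|_{C(X, E)}$ and $\sup_{\psi}\mathcal{L}_{X,\mathbb{K}}(\psi\circ f) = \mathcal{L}_{X, E}(f)$. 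Thus $\lip(X) \otimes E$ embeds isometrically into $\lip(X, E)$, and it remains to show that finite-rank little Lipschitz functions are dense, giving $\lip(X, E) \cong \lip(X) \widehat{\otimes}_{\varepsilon} E$. I expect this density statement, to be obtained by a partition-of-unity approximation adapted to the little Lipschitz setting, to be the main technical obstacle.

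Next I would dualize. Since $E^{*}$ or $\lip(X)^{*}$ has the approximation property, the standard duality for the injective tensor product identifies the dual of the injective tensor product with the projective tensor product of the duals, isometrically and precisely under this hypothesis:
\[
\lip(X, E)^{*} \cong \bigl(\lip(X) \widehat{\otimes}_{\varepsilon} E\bigr)^{*} \cong \lip(X)^{*} \widehat{\otimes}_{\pi} E^{*}.
\]
Dualizing once more, and using $\bigl(A \widehat{\otimes}_{\pi} B\bigr)^{*} \cong \mathcal{B}(A, B^{*})$ followed by the isometric transpose identification of bounded bilinear forms on $\lip(X)^{*} \times E^{*}$, I obtain
\[
\lip(X, E)^{**} \cong \bigl(\lip(X)^{*} \widehat{\otimes}_{\pi} E^{*}\bigr)^{*} \cong \mathcal{B}\bigl(\lip(X)^{*}, E^{**}\bigr) \cong \mathcal{B}\bigl(E^{*}, \lip(X)^{**}\bigr).
\]

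It is here that the hypothesis that $\lip(X)$ separates points uniformly enters, through Theorem~\ref{ThmHW}: it yields the isometric isomorphism $\lip(X)^{**} \cong \Lip(X)$, so that $\lip(X, E)^{**} \cong \mathcal{B}(E^{*}, \Lip(X))$. To finish I would identify $\mathcal{B}(E^{*}, \Lip(X))$ with $\Lip(X, E^{**})$: to $T \in \mathcal{B}(E^{*}, \Lip(X))$ associate $F \colon X \to E^{**}$ by $\langle F(x), e^{*}\rangle = (T e^{*})(x)$, which is seen to be Lipschitz with values in $E^{**}$, and conversely. A computation parallel to the one above, again invoking $\sup \max = \max \sup$, gives $\|T\| = \|F\|_{\mathrm{max}}$, so this correspondence is an isometric isomorphism. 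Finally, tracking an element $f^{**} \in \lip(X, E)^{**}$ through the whole chain, and writing $\tau_{x} \colon \lip(X, E) \to E$ for the bounded evaluation at $x$ with adjoint $\tau_{x}^{*} \colon E^{*} \to \lip(X, E)^{*}$, the resulting isometry $\Phi \colon \lip(X, E)^{**} \to \Lip(X, E^{**})$ is given explicitly by
\[
\bigl\langle (\Phi f^{**})(x), e^{*}\bigr\rangle = \bigl\langle f^{**}, \tau_{x}^{*} e^{*}\bigr\rangle \qquad \bigl(f^{**} \in \lip(X, E)^{**},\ x \in X,\ e^{*} \in E^{*}\bigr),
\]
the exact vector-valued analogue of the map $\Lambda$ in Theorem~\ref{ThmHW}. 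Besides the density in the first step, the points requiring the most care are verifying the hypotheses of the tensor-product duality theorem and checking that the abstract isometries assemble into precisely this map $\Phi$.
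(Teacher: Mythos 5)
Your overall architecture matches the paper's: embed the algebraic tensor product isometrically into \(\lip(X,E)\) via the injective norm, dualize twice using \(\bigl(E\widehat{\otimes}_{\varepsilon}\lip(X)\bigr)^{*}\cong E^{*}\widehat{\otimes}_{\pi}\lip(X)^{*}\) and \(\bigl(E^{*}\widehat{\otimes}_{\pi}\lip(X)^{*}\bigr)^{*}\cong\mathscr{B}(E^{*},\lip(X)^{**})\), invoke Theorem~\ref{ThmHW} to replace \(\lip(X)^{**}\) by \(\Lip(X)\), identify \(\mathscr{B}(E^{*},\Lip(X))\) with \(\Lip(X,E^{**})\), and track the composite to the evaluation formula. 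Those steps are sound and your final formula agrees with the paper's. But there are two genuine gaps.

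The serious one is the step you yourself flag as "the main technical obstacle": the density of \(\lip(X)\otimes E\) in \(\lip(X,E)\), i.e.\ surjectivity of the isometry \(U\). A partition-of-unity approximation will not deliver this: multiplying a fine Lipschitz partition of unity against the values \(f(x_{i})\) controls \(\|f-\sum_{i}\phi_{i}f(x_{i})\|_{C(X,E)}\), but gives no control of \(\mathcal{L}_{X,E}\bigl(f-\sum_{i}\phi_{i}f(x_{i})\bigr)\), which is what the \(\|\cdot\|_{\max}\)-norm requires; there is no reason the Lipschitz seminorm of the error tends to zero as the mesh shrinks. The paper does not prove density directly at all. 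Instead it proves (Propositions~\ref{Prop2-4}--\ref{Prop2-6}, via representing functionals by \(E^{*}\)-valued measures on the compactification \(K_{\omega}\) of \(X\cup W\)) that \(E^{*}\otimes\{\tau_{x}\}_{x\in X}\) is norm-dense in \(\lip(X,E)^{*}\), then shows that on \(E^{*}\otimes\lip(X)^{*}\) the dual norm \(\|\cdot\|_{\lip(X,E)^{*}}\) is squeezed between \(\varepsilon^{*}\) and \(\pi\), which coincide by the tensor-duality lemma; hence \(U^{*}\) is an isometry, so \(U\) has dense (and closed) range. In other words, surjectivity of \(U\) is itself a consequence of the hypotheses (A\ref{con1}) and (A\ref{con2}), obtained by duality, not an independent approximation fact.

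The second gap is in your appeal to \(\bigl(\lip(X)\widehat{\otimes}_{\varepsilon}E\bigr)^{*}\cong\lip(X)^{*}\widehat{\otimes}_{\pi}E^{*}\) "precisely under" the approximation property. The approximation property alone does not suffice: the dual of the injective tensor product is the space of integral bilinear forms, and identifying these isometrically with the projective tensor product of the duals (i.e.\ with nuclear forms) requires in addition that one of the dual spaces have the Radon--Nikod\'{y}m property (this is \cite[Theorem 5.33]{Rya}). The paper supplies this by showing (Proposition~\ref{Prop2-3}) that condition (A\ref{con1}) forces \(\lip(X)^{*}\) to be separable, hence to have the Radon--Nikod\'{y}m property. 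Without this ingredient your chain of isometries does not get off the ground. Once both points are repaired, the remainder of your argument coincides with the paper's.
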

%%%%%%%%%%%%%%%%%%%%%%%%%%%%%%%%%%%%%%%%%%%%%%%%%%%%%%%%%%%%%% end introduction

%%%%%%%%%%%%%%%%%%%%%%%%%%%%%%%%%%%%%%%%%%%%%%%%%%%%%%%%%%%%%% begin preliminaries
\section{Preliminaries}
Through this section, 
we assume that \(X\) is a compact metric space 
and \(E\) is a Banach space with norm \(\|\cdot\|_{E}\). 

%%%%%%%%%%%%%%%%%%%%%%%%%%%%%% begin prop
\begin{prop}\label{Prop2-1}
	Let \(e^{*} \in E^{*}\) and \(f \in \lip (X , E)\).  
	A function \(e^{*} \circ f \colon X \rightarrow \mathbb{K}\) defined by 
	\begin{align}
	(e^{*} \circ f)(x) = e^{*}(f(x)) \qquad (x \in X)
	\end{align}
	enjoys \(e^{*} \circ f \in \lip (x)\) 
	and \(\|e^{*} \circ f\|_{\textrm{max}} \le \|e^{*}\|_{E^{*}} \|f\|_{\textrm{max}}\)
\end{prop}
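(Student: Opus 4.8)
The plan is to verify directly the two defining requirements for membership in \(\lip(X)\) for the scalar-valued function \(e^{*} \circ f\), using nothing more than the fact that \(e^{*}\) is a bounded linear functional, so that \(|e^{*}(v)| \le \|e^{*}\|_{E^{*}} \|v\|_{E}\) for every \(v \in E\). Since \(f \in \lip(X,E) \subset C(X,E)\) and \(e^{*}\) is continuous and linear, the composition \(e^{*} \circ f\) is already a continuous scalar function; what remains is to control its sup-norm and its Lipschitz quotient, and then to check the little Lipschitz limit.

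First I would establish the norm bound, treating the two components of \(\|\cdot\|_{\textrm{max}}\) separately. For the sup-norm, applying the functional pointwise gives \(\|e^{*} \circ f\|_{C(X)} = \sup_{x \in X} |e^{*}(f(x))| \le \|e^{*}\|_{E^{*}} \sup_{x \in X} \|f(x)\|_{E} = \|e^{*}\|_{E^{*}} \|f\|_{C(X,E)}\). For the Lipschitz seminorm, linearity of \(e^{*}\) yields, for all \(x \neq x'\),
\[
\frac{|(e^{*} \circ f)(x) - (e^{*} \circ f)(x')|}{d(x,x')}
= \frac{|e^{*}(f(x) - f(x'))|}{d(x,x')}
\le \|e^{*}\|_{E^{*}} \, \frac{\|f(x) - f(x')\|_{E}}{d(x,x')}.
\]
Taking the supremum over \(x \neq x'\) gives \(\mathcal{L}_{X,\mathbb{K}}(e^{*} \circ f) \le \|e^{*}\|_{E^{*}} \, \mathcal{L}_{X,E}(f)\). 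Combining the two estimates and taking the maximum produces \(\|e^{*} \circ f\|_{\textrm{max}} \le \|e^{*}\|_{E^{*}} \, \|f\|_{\textrm{max}}\).

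It then remains to confirm the little Lipschitz property, namely that the Lipschitz quotient of \(e^{*} \circ f\) tends to \(0\) as \(d(x,x') \to 0\). This is immediate from the same displayed inequality: its right-hand side is \(\|e^{*}\|_{E^{*}}\) times the Lipschitz quotient of \(f\), which converges to \(0\) as \(d(x,x') \to 0\) because \(f\) is little Lipschitz, and hence the left-hand side converges to \(0\) as well. This shows \(e^{*} \circ f \in \lip(X)\) and completes the argument.

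I do not anticipate a genuine obstacle here, since every step reduces to the single inequality \(|e^{*}(v)| \le \|e^{*}\|_{E^{*}}\|v\|_{E}\) applied to differences \(v = f(x) - f(x')\) and to values \(v = f(x)\); the only point requiring a little care is to keep the passage to the supremum and to the limit separate, so that both the norm estimate and the vanishing of the Lipschitz quotient are preserved.
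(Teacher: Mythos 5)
Your proof is correct and is precisely the straightforward verification the paper leaves to the reader (its proof reads only ``Straightforward''): the single inequality \(|e^{*}(v)| \le \|e^{*}\|_{E^{*}}\|v\|_{E}\) applied to values and to differences gives both halves of the norm bound, and the same pointwise estimate on the Lipschitz quotient yields the little Lipschitz property. Nothing is missing.
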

%%%%%%%%%%%%%%%%%%%%%%%%%%%%%% end prop

%%%%%%%%%%%%%%%%%%%%%%%%%%%%%% begin proof
\begin{proof}
	Straightforward.
\end{proof}
%%%%%%%%%%%%%%%%%%%%%%%%%%%%%% end proof
 
The next proposition implies that \(\lip (X)^{*}\) has the Radon-Nikod\'{y}m property 
(see e.g., \cite[Corollary 5.42]{Rya}). 

%%%%%%%%%%%%%%%%%%%%%%%%%%%%%% begin prop
\begin{prop}\label{Prop2-3}
	If \(\lip(X)\) separates points uniformly, 
	then \(\lip(X)^{*}\) is separable.
\end{prop}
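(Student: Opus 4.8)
The plan is to exhibit a countable subset of $\lip(X)^*$ whose closed linear span is the whole space, the span being built from evaluation functionals and the density being forced by the Hanin--Weaver isomorphism of Theorem~\ref{ThmHW}.

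First I would fix a countable dense subset $D = \{x_n\}_{n \in \mathbb{N}}$ of $X$, which exists because $X$ is a compact metric space, hence separable. Each $\tau_{x_n}$ lies in $\lip(X)^*$ since $|\tau_x(f)| = |f(x)| \le \|f\|_{\textrm{max}}$. Let $M$ be the closed linear span in $\lip(X)^*$ of the countable family $\{\tau_{x_n} : n \in \mathbb{N}\}$. Passing to rational (respectively $(\mathbb{Q}+i\mathbb{Q})$-) linear combinations shows $M$ is separable, so the proposition reduces to the single assertion $M = \lip(X)^*$. I would also record that $x \mapsto \tau_x$ is Lipschitz, hence continuous, from $X$ into $\lip(X)^*$: for $\|f\|_{\textrm{max}} \le 1$ one has $|f(x) - f(x')| \le \mathcal{L}_{X, \mathbb{K}}(f)\, d(x,x') \le d(x,x')$, so $\|\tau_x - \tau_{x'}\|_{\lip(X)^*} \le d(x,x')$. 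Consequently every $\tau_x$ with $x \in X$ already belongs to $M$, being a limit of the $\tau_{x_n}$.

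The main step, and the one place the hypothesis is used, is to prove $M = \lip(X)^*$. By the Hahn--Banach theorem a subspace is dense precisely when its annihilator in the dual is trivial, so it suffices to check that the only $f^{**} \in \lip(X)^{**}$ vanishing on $M$ is $f^{**} = 0$. Suppose $f^{**}(\tau_x) = 0$ for every $x \in X$. Since $\lip(X)$ separates points uniformly, Theorem~\ref{ThmHW} supplies the isometric isomorphism $\Lambda \colon \lip(X)^{**} \to \Lip(X)$ given by $(\Lambda f^{**})(x) = f^{**}(\tau_x)$; thus $\Lambda f^{**} = 0$, and injectivity of $\Lambda$ yields $f^{**} = 0$. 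Hence $M$ has trivial annihilator in $\lip(X)^{**}$, so $M$ is dense and $M = \lip(X)^*$. Combined with the separability of $M$, this proves the claim.

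I do not expect a serious obstacle here: once Theorem~\ref{ThmHW} is invoked, the argument is a clean duality computation. The only point requiring care is the bookkeeping that the relevant annihilator lives in the bidual $\lip(X)^{**}$ and that $\Lambda$ is exactly the identification of that bidual with $\Lip(X)$, so that $\Lambda f^{**} = 0$ genuinely forces $f^{**} = 0$.
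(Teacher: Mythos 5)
Your proposal is correct, and its outer structure matches the paper's: both reduce separability to the density in $\lip(X)^*$ of the linear span of the evaluation functionals, then pass to rational combinations over a countable dense subset of $X$, using the $1$-Lipschitz continuity of $x \mapsto \tau_x$ to recover all of $\{\tau_x\}_{x \in X}$ in the closure. The one place you diverge is the key density step: the paper simply cites Johnson's result that $\operatorname{span}\{\tau_x : x \in X\}$ is dense in $\lip(X)^*$, whereas you prove it from scratch by a Hahn--Banach annihilator argument combined with the injectivity of the Hanin--Weaver map $\Lambda$ from Theorem~\ref{ThmHW}(c). Your route is self-contained modulo Theorem~\ref{ThmHW} and has the virtue of making explicit exactly where the hypothesis that $\lip(X)$ separates points uniformly is used (namely, to know that $\Lambda$ is an isomorphism, hence injective); the paper's route is shorter but leans on an external reference whose applicability in the general $\lip(X)$ setting (as opposed to $\lip_\alpha(X)$ with $\alpha<1$) is less transparent. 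Both arguments are sound.
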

%%%%%%%%%%%%%%%%%%%%%%%%%%%%%% end prop

%%%%%%%%%%%%%%%%%%%%%%%%%%%%%%begin proof
\begin{proof}
	It follows from compactness of \(X\) that 
	there exists a countable dense subset \(\mathcal{C}\) of \(X\). 
	By using the assumption, 
	\(\{\tau_{x}\}_{x \in \mathcal{C}}\) is countable subset of \(\lip(X)^{*}\). 
	\(\mathcal{Q}\) denotes the subset of \(\mathbb{K}\) 
	such that real and imaginary parts are rational numbers. 
	Note that \(\mathcal{Q}\) is countable dense subset of \(\mathbb{K}\). 
	Put
	\begin{align}
	\textrm{L.h.}_{\mathcal{Q}} \{\tau_{x}\}_{x \in \mathcal{C}} 
	\coloneqq 
	\left\{ 
	\sum_{k=1}^{N} \beta_{k}\tau_{x_{k}} : 
	N \in\mathbb{N}, \beta_{k} \in \mathcal{Q}, x_{k} \in \mathcal{C}(k=1,\ldots,N)
	\right\}
\end{align} 
	and
\begin{align}
	\textrm{L.h.}_{\mathbb{K}} \{\tau_{x}\}_{x \in X} 
	\coloneqq 
	\left\{ 
	\sum_{k=1}^{N} \beta_{k}\tau_{x_{k}} : 
	N \in\mathbb{N}, \beta_{k} \in \mathbb{K}, x_{k} \in X(k=1,\ldots,N)
	\right\}. 
	\end{align}
	It follows from countability of \(\{\tau_{x}\}_{x \in \mathcal{C}}\) and \(\mathcal{Q}\) that 
	\(\textrm{L.h.}_{\mathcal{Q}} \{\tau_{x}\}_{x \in \mathcal{C}}\) is countable subset of \(\lip(X)^{*}\). 
	By using the density of \(\mathcal{C}\) and \(\mathcal{Q}\), 
	we see that 
	the closure of \(\textrm{L.h.}_{\mathcal{Q}} \{\tau_{x}\}_{x \in \mathcal{C}}\) 
	with respect to the norm \(\|\cdot\|_{\lip(X)^{*}}\)
	contains \(\textrm{L.h.}_{\mathbb{K}} \{\tau_{x}\}_{x \in X}\). 
	Since \(\textrm{L.h.}_{\mathbb{K}} \{\tau_{x}\}_{x \in X}\) is dense subset of \(\lip(X)^{*}\) 
	(see e.g., \cite[Theorem 4.5]{Joh}),  
	\(\textrm{L.h.}_{\mathcal{Q}} \{\tau_{x}\}_{x \in \mathcal{C}}\) is countable dense subset of \(\lip(X)^{*}\). 
\end{proof}
%%%%%%%%%%%%%%%%%%%%%%%%%%%%%% end proof

The next proposition implies that norm of \(\lip (X, E)^{*}\) is the cross norm 
on algebraic tensor product \(E^{*} \otimes \lip (X)^{*}\).

%%%%%%%%%%%%%%%%%%%%%%%%%%%%%% begin prop
\begin{prop}\label{Prop2-2}
	Let \(e^{*} \in E^{*}\) and \(h^{*} \in \lip (X)^{*}\). 
	A functional \(e^{*} \otimes h^{*}\) on \(\lip(X, E)\) defined by
	\begin{align}\label{eq:2-2-1}
		(e^{*} \otimes h^{*}) (f) = h^{*} (e^{*} \circ f) \qquad ( f \in \lip (X, E)).
	\end{align} 
	enjoys \(e^{*} \otimes h^{*} \in \lip (X, E)^{*}\) 
	and \(\|e^{*} \otimes h^{*}\|_{\lip(X, E)^{*}} = \|e^{*}\|_{E^{*}} \|h^{*}\|_{\lip (X)^{*}}\). 
\end{prop}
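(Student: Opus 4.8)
The plan is to split the claim into two parts: membership in \(\lip(X,E)^*\) together with the upper estimate \(\|e^* \otimes h^*\|_{\lip(X,E)^*} \le \|e^*\|_{E^*}\|h^*\|_{\lip(X)^*}\), and the matching lower estimate.

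For the first part, linearity of the map \(f \mapsto h^*(e^* \circ f)\) is immediate, since \(f \mapsto e^* \circ f\) is linear (because \(e^*\) is) and \(h^*\) is linear. Boundedness and the upper estimate follow at once from Proposition~\ref{Prop2-1}: for every \(f \in \lip(X,E)\) we have
\[
|(e^* \otimes h^*)(f)| = |h^*(e^* \circ f)| \le \|h^*\|_{\lip(X)^*}\,\|e^* \circ f\|_{\textrm{max}} \le \|h^*\|_{\lip(X)^*}\,\|e^*\|_{E^*}\,\|f\|_{\textrm{max}},
\]
and taking the supremum over the unit ball of \(\lip(X,E)\) gives the desired inequality.

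For the lower estimate I would test \(e^* \otimes h^*\) against elementary functions of the form \(f = g\cdot e\), where \(g \in \lip(X)\) and \(e \in E\), defined by \(f(x) = g(x)e\). Each such \(f\) belongs to \(\lip(X,E)\), because \(\|f(x) - f(x')\|_E = |g(x)-g(x')|\,\|e\|_E\) forces the little-Lipschitz limit to vanish; moreover a short computation gives \(e^* \circ f = e^*(e)\,g\) and \(\|f\|_{\textrm{max}} = \|e\|_E\,\|g\|_{\textrm{max}}\), whence \((e^* \otimes h^*)(f) = e^*(e)\,h^*(g)\). Given \(\varepsilon > 0\), I would choose \(e \in E\) with \(\|e\|_E = 1\) and \(|e^*(e)| > \|e^*\|_{E^*} - \varepsilon\), and \(g \in \lip(X)\) with \(\|g\|_{\textrm{max}} = 1\) and \(|h^*(g)| > \|h^*\|_{\lip(X)^*} - \varepsilon\). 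Then \(\|f\|_{\textrm{max}} = 1\), so
\[
\|e^* \otimes h^*\|_{\lip(X,E)^*} \ge |(e^* \otimes h^*)(f)| = |e^*(e)|\,|h^*(g)| > (\|e^*\|_{E^*} - \varepsilon)(\|h^*\|_{\lip(X)^*} - \varepsilon),
\]
and letting \(\varepsilon \to 0\) yields \(\|e^* \otimes h^*\|_{\lip(X,E)^*} \ge \|e^*\|_{E^*}\,\|h^*\|_{\lip(X)^*}\).

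The one point that genuinely needs care --- and the main thing to verify --- is the factorization \(\|g\cdot e\|_{\textrm{max}} = \|e\|_E\,\|g\|_{\textrm{max}}\). This requires checking separately that \(\|g\cdot e\|_{C(X,E)} = \|e\|_E\,\|g\|_{C(X)}\) and that \(\mathcal{L}_{X,E}(g\cdot e) = \|e\|_E\,\mathcal{L}_{X,\mathbb{K}}(g)\); since the scalar \(\|e\|_E\) then factors out of both entries of the maximum, it factors out of \(\|\cdot\|_{\textrm{max}}\) as well. It is precisely this exact factorization, rather than a mere inequality, that makes the rank-one test functions sharp and promotes the bound to an equality.
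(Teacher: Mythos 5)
Your proposal is correct and follows essentially the same route as the paper: the upper bound via Proposition~\ref{Prop2-1}, and the lower bound by testing against the rank-one functions \(f = g\cdot e\) with the factorization \(\|g\cdot e\|_{\textrm{max}} = \|e\|_{E}\,\|g\|_{\textrm{max}}\) (the paper takes suprema over \(e\) and \(g\) directly rather than via an \(\varepsilon\)-argument, but this is the same computation).
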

%%%%%%%%%%%%%%%%%%%%%%%%%%%%%% end prop

%%%%%%%%%%%%%%%%%%%%%%%%%%%%%% begin proof
\begin{proof}
	Let \(f \in \lip(X, E)\) be arbitrary. 
	By \Cref{Prop2-1}, 
	we have 
	\[
	\left|(e^{*} \otimes h^{*})(f)\right| 
	\le\|h^{*}\|_{\lip (X)^{*}} \|e^{*} \circ f\|_{\textrm{max}} 
	\le\|h^{*}\|_{\lip (X)^{*}} \|e^{*}\|_{E^{*}} \|f\|_{\textrm{max}}. 
	\]
	Therefore, 
	we have \(e^{*} \otimes h^{*} \in \lip (X, E)^{*}\) 
	and \(\|e^{*} \otimes h^{*}\|_{\lip(X, E)^{*}} \le \|e^{*}\|_{E^{*}} \|h^{*}\|_{\lip (X)^{*}}\). 
	
	Let \(e \in E\) and \(h \in \lip(X)\) be arbitrary. 
	Put \(f(x) \coloneqq h(x)e \ (x \in X)\). 
	Then, 
	\(f \in \lip(X, E)\) and \(\|f\|_{\textrm{max}} = \|e\|_{E}\|h\|_{\textrm{max}}\). 
	Since the equation \((e^{*}e)h = e^{*} \circ f\) holds on \(X\), 
	we have 
	\begin{align}
		\begin{split}
			|e^{*} e||h^{*} h| 
			&= |h^{*}((e^{*} e)h)| 
			= |h^{*}(e^{*} \circ f)| \\
			&= |(e^{*} \otimes h^{*})(f)| 
			\le \|e^{*} \otimes h^{*}\|_{\lip(X, E)^{*}} \|e\|_{E}\|h\|_{\textrm{max}}. 
		\end{split}
	\end{align} 
	Therefore, 
	we have \(\|e^{*}\|_{E^{*}} \|h^{*}\|_{\lip (X)^{*}} \le \|e^{*} \otimes h^{*}\|_{\lip(X, E)^{*}}\) 
	and \(\|e^{*} \otimes h^{*}\|_{\lip(X, E)^{*}} = \|e^{*}\|_{E^{*}} \|h^{*}\|_{\lip (X)^{*}}\).
\end{proof}
%%%%%%%%%%%%%%%%%%%%%%%%%%%%%% end proof

Finally, 
we prove the following proposition: 

%%%%%%%%%%%%%%%%%%%%%%%%%%%%%% begin prop
\begin{prop}\label{Prop2-4}
	If \(\lip(X)\) separates points uniformly, 
	then \(E^{*} \otimes \lip (X)^{*}\) is dense in \(\lip (X, E)^{*}\). 
\end{prop}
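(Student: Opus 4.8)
The plan is to identify \(\lip(X,E)^{*}\) with a space of operators and then invoke the Radon-Nikod\'{y}m property supplied by \Cref{Prop2-3}. First I would check that the natural map \(E \otimes \lip(X) \to \lip(X,E)\) sending \(\sum_{k} e_{k} \otimes h_{k}\) to the function \(x \mapsto \sum_{k} h_{k}(x) e_{k}\) is isometric when \(E \otimes \lip(X)\) carries the injective tensor norm. This is a direct computation from \Cref{Prop2-1}: for such a function \(f\) one has \(\|f\|_{\textrm{max}} = \sup_{\|e^{*}\|_{E^{*}} \le 1} \|e^{*} \circ f\|_{\textrm{max}}\), and expanding \(\|e^{*}\circ f\|_{\textrm{max}} = \sup_{\|h^{*}\|_{\lip(X)^{*}} \le 1} |h^{*}(e^{*}\circ f)|\) through the duality of \(\lip(X)\) with \(\lip(X)^{*}\) yields exactly the injective norm of the tensor \(\sum_{k} e_{k}\otimes h_{k}\). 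The crucial analytic step is then to show that these elementary functions are dense in \(\lip(X,E)\), so that \(\lip(X,E) = E \widehat{\otimes}_{\varepsilon} \lip(X)\), the completed injective tensor product; this is where the uniform separation hypothesis is genuinely used, since it provides, via \Cref{ThmHW}, enough little Lipschitz functions with controlled seminorm to interpolate a continuous \(E\)-valued little Lipschitz function along a finite net while keeping the Lipschitz error small.

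Granting the identification \(\lip(X,E) = E \widehat{\otimes}_{\varepsilon} \lip(X)\), the dual is the space of integral operators: \(\lip(X,E)^{*} \cong \mathcal{I}(E, \lip(X)^{*})\) isometrically, under the correspondence assigning to \(\Phi \in \lip(X,E)^{*}\) the operator \(T_{\Phi} \colon E \to \lip(X)^{*}\) determined by \(\langle T_{\Phi} e, h\rangle = \Phi\bigl(h(\cdot)e\bigr)\). A computation like the one in \Cref{Prop2-2} shows that under this correspondence the functional \(e^{*}\otimes h^{*}\) goes to the rank-one operator \(e \mapsto e^{*}(e)\,h^{*}\); hence \(E^{*}\otimes \lip(X)^{*}\) is carried onto the finite-rank operators inside \(\mathcal{I}(E, \lip(X)^{*})\), and it suffices to prove that the finite-rank operators are norm-dense among the integral operators.

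This last density is exactly where \Cref{Prop2-3} enters. Since \(\lip(X)\) separates points uniformly, \(\lip(X)^{*}\) is separable and hence has the Radon-Nikod\'{y}m property. By the standard consequence of that property that an integral operator with values in a space having the Radon-Nikod\'{y}m property is nuclear (see \cite{Rya}), we obtain \(\mathcal{I}(E, \lip(X)^{*}) = \mathcal{N}(E, \lip(X)^{*})\) as sets. As the finite-rank operators are by definition dense in the nuclear operators in the nuclear norm, and the integral norm is dominated by the nuclear norm, they are a fortiori dense in \(\mathcal{N}(E, \lip(X)^{*})\) for the integral norm. Transporting this back through the isometry of the first paragraph gives the desired norm-density of \(E^{*}\otimes\lip(X)^{*}\) in \(\lip(X,E)^{*}\). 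Note that no approximation property is required here: that hypothesis is needed only in \Cref{thm1}, to upgrade the present density to the isometric identification with a projective tensor product.

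I expect the main obstacle to be the density of the elementary functions \(x \mapsto \sum_{k} h_{k}(x)e_{k}\) in \(\lip(X,E)\), i.e. the identification \(\lip(X,E) = E \widehat{\otimes}_{\varepsilon}\lip(X)\). The weak-\(*\) density of \(E^{*}\otimes\lip(X)^{*}\) is easy, since its pre-annihilator in \(\lip(X,E)\) is \(\{0\}\): if \((e^{*}\otimes h^{*})(f)=h^{*}(e^{*}\circ f)=0\) for all \(e^{*}\) and \(h^{*}\), then \(e^{*}\circ f = 0\) for every \(e^{*}\), whence \(f=0\) because \(E^{*}\) separates the points of \(E\). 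However, upgrading weak-\(*\) density to norm density is false in general for subspaces of a dual space, so the argument must pass through the tensor/operator identification and the Radon-Nikod\'{y}m property rather than through Hahn-Banach alone.
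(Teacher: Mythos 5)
Your reduction to operator theory founders on its first step, and that step is not a technicality. The identity \(\lip(X,E)=E\widehat{\otimes}_{\varepsilon}\lip(X)\), i.e.\ the norm-density of the elementary functions \(x\mapsto\sum_{k}h_{k}(x)e_{k}\) in \(\lip(X,E)\), is exactly the content of \Cref{Lem3-3} of the paper. There it is established only under the additional hypothesis (A\ref{con2}) (an approximation property), and --- more seriously for your plan --- its proof \emph{uses} \Cref{Prop2-4}: the surjectivity of \(U\) is obtained indirectly, by building an isometry \(J\colon\lip(X,E)^{*}\to E^{*}\widehat{\otimes}_{\pi}\lip(X)^{*}\) out of the density of \(E^{*}\otimes\lip(X)^{*}\) in \(\lip(X,E)^{*}\) and concluding that \(U^{*}\) is an isometry. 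So taking that identification as an input makes your argument circular unless you supply an independent proof of the density, and the interpolation argument you sketch does not supply one: \Cref{ThmHW}(b) lets you match \(f\) on a finite net by a function with controlled Lipschitz constant, which bounds the \emph{sup-norm} of the error by a constant times the mesh, but it gives no control on \(\mathcal{L}_{X,E}\) of the error, which can stay of the order of \(\mathcal{L}_{X,E}(f)\) however fine the net is --- and \(\|\cdot\|_{\max}\) sees precisely that seminorm. This also undercuts your closing claim that no approximation property is needed on your route. The remainder of your argument (integral operators into a separable dual, hence into a space with the Radon-Nikod\'{y}m property, are nuclear; finite-rank operators are nuclear-norm dense in the nuclear operators; the integral norm is dominated by the nuclear norm) is essentially sound, but it all rests on the unproved identification; note also that if \(U\) is merely an isometric embedding rather than onto, restriction along \(U\) identifies \(\mathcal{I}(E,\lip(X)^{*})\) with a quotient of \(\lip(X,E)^{*}\), not with \(\lip(X,E)^{*}\) itself.

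The paper sidesteps all of this by working directly with measures: it embeds \(\lip(X,E)\) \`{a} la de Leeuw into \(C(K_{\omega},E)\) with \(K=X\cup\bigl((X\times X)\smallsetminus\Delta\bigr)\), represents an arbitrary \(f^{*}\in\lip(X,E)^{*}\) via Hahn-Banach as integration against some \(\mu\in M(K_{\omega},E^{*})\), truncates \(\mu\) to compact sets on which \(d(x,x')\) is bounded away from zero so that the off-diagonal part collapses to integration of \(f\) itself against measures on \(X\) (\Cref{Prop2-5}), and then discretizes the resulting measure to land in \(E^{*}\otimes\{\tau_{x}\}_{x\in X}\) (\Cref{Prop2-6}). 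To salvage your approach you would need either a genuinely independent proof that \(E\otimes\lip(X)\) is \(\|\cdot\|_{\max}\)-dense in \(\lip(X,E)\) under (A\ref{con1}) alone, or a reformulation that avoids assuming \(U\) surjective.
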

%%%%%%%%%%%%%%%%%%%%%%%%%%%%%% end prop

This proposition can be proved tracing a method in \cite{Joh}. 
%By Noting that \(\{\tau_{x}\}_{x \in X}\) is contained in \(\lip (X)^{*}\) 
If \(\lip (X)\) separates points uniformly, 
then we have 
\begin{align}
	E^{*} \otimes \{\tau_{x}\}_{x \in X} 
	\subset E^{*} \otimes \lip (X)^{*} 
	\subset \lip (X, E)^{*}. 
\end{align}
Therefore, 
to prove the \Cref{Prop2-4}, 
we show that \(E^{*} \otimes \{\tau_{x}\}_{x \in X}\) is dense in \(\lip (X, E)^{*}\) 
if \(\lip (X)\) separates the points uniformly. 

Let us introduce several notations. 
Set 
\begin{align}
	\begin{split}
		\Delta &\coloneqq \{(x, x') \in X \times X : x = x'\}, \\ 
		W &\coloneqq (X \times X) \smallsetminus \Delta, \\ 
		K &\coloneqq X \cup W, 
	\end{split}
\end{align}
so that \(K\) is locally compact space. 
\(C_{0}(K, E)\) denotes the Banach space of \(E\)-valued continuous functions 
which vanish at infinity. 
We embed \(\lip (X, E)\) in \(C_{0}(K, E)\) in the usual way; 
\begin{align}
	\begin{split}
		\tilde{f}(x) &\coloneqq f(x)\ (x \in X), \\ 
		\tilde{f}(x, x') &\coloneqq \frac{f(x) - f(x')}{d(x, x')}\ ((x, x') \in W), \\ 
		\tilde{f}(\omega) &\coloneqq 0, 
	\end{split}
\end{align} 
where \(\infty\) is the point at infinity. 
Hence \(\lip (X, E)\) can be considered as a subspace of \(C(K_{\omega}, E)\), 
where \(K_{\omega}\) is the one-point compactification of \(K\). 

Let \(Y\) be a compact Hausdorff space. 
Then \(C(Y, E)^{*}\) is isometrically isomorphic to \(M(Y, E^{*})\),  
where \(M(Y, E^{*})\) is the Banach space of \(E^{*}\)-valued countably additive regular Borel measures on \(Y\) 
with finite variation (see e.g., \cite[Theorem 1.7.1]{CM}). 

Let \(\mu \in M(X, E^{*})\). 
A functional \(f_{\mu}^{*}\) on \(\lip (X, E)\) defined by  
\begin{align}\label{func}
	f_{\mu}^{*} f \coloneqq \int_{X} f(x) \, d\mu(x) \qquad (f \in \lip(X, E))
\end{align}
enjoys \(f_{\mu}^{*} \in \lip(X, E)^{*}\). 
Set 
\begin{align}
	\mathscr{N} 
	\coloneqq 
	\left\{
	f_{\mu}^{*} : \mu \in M(X, E^{*})\ \mbox{and}\ f_{\mu}^{*}\ \mbox{is represented as \eqref{func}} 
	\right\}. 
\end{align}
Clearly, 
if \(\lip(X)\) separates points uniformly, 
\(E^{*} \otimes \{\tau_{x}\}_{x \in X}\) is contained in \(\mathscr{N}\). 

To prove \Cref{Prop2-4}, 
we first show that following proposition: 

%%%%%%%%%%%%%%%%%%%%%%%%%%%%%% begin prop
\begin{prop}\label{Prop2-5}
	\(\mathscr{N}\) is dense in \(\lip (X, E)^{*}\). 
\end{prop}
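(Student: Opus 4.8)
The plan is to exploit the isometric embedding \(f \mapsto \tilde{f}\) of \(\lip(X, E)\) into \(C_{0}(K, E) \subset C(K_{\omega}, E)\) together with the identification \(C(K_{\omega}, E)^{*} \cong M(K_{\omega}, E^{*})\), thereby reducing the density of \(\mathscr{N}\) to an approximation argument near the diagonal. Concretely, fix \(\phi \in \lip(X, E)^{*}\). By the Hahn--Banach theorem \(\phi\) extends to a functional on \(C(K_{\omega}, E)\), so there is some \(\nu \in M(K_{\omega}, E^{*})\) with \(\phi(f) = \int_{K_{\omega}} \tilde{f} \, d\nu\) for every \(f \in \lip(X, E)\). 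Splitting \(\nu\) along the Borel decomposition \(K_{\omega} = X \cup W \cup \{\omega\}\) and using \(\tilde{f}(\omega) = 0\), I obtain
\[
\phi(f) = \int_{X} f(x) \, d\nu_{X}(x) + \int_{W} \frac{f(x) - f(x')}{d(x, x')} \, d\nu_{W}(x, x'),
\]
where \(\nu_{X}\) and \(\nu_{W}\) denote the restrictions of \(\nu\) to \(X\) and \(W\). The first summand is exactly \(f_{\nu_{X}}^{*} \in \mathscr{N}\), so the whole task is to approximate the second summand by elements of \(\mathscr{N}\).

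For the \(W\)-term, for \(\delta > 0\) set \(W_{\delta} \coloneqq \{(x, x') \in X \times X : d(x, x') \ge \delta\}\), which is compact, and restrict \(\nu_{W}\) to \(W_{\delta}\). On \(W_{\delta}\) the weight \((x, x') \mapsto 1/d(x, x')\) is continuous and bounded by \(1/\delta\), so the \(E^{*}\)-valued measure \(\lambda_{\delta} \coloneqq \frac{1}{d}\,\nu_{W}|_{W_{\delta}}\) belongs to \(M(W_{\delta}, E^{*})\). Let \(\mu_{1} \coloneqq (\pi_{1})_{*}\lambda_{\delta}\) and \(\mu_{2} \coloneqq (\pi_{2})_{*}\lambda_{\delta}\) be the pushforwards under the two coordinate projections \(\pi_{1}, \pi_{2} \colon W_{\delta} \to X\); both lie in \(M(X, E^{*})\). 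The change-of-variables formula (verified first on simple functions) then yields
\[
\int_{W_{\delta}} \frac{f(x) - f(x')}{d(x, x')} \, d\nu_{W} = \int_{X} f \, d\mu_{1} - \int_{X} f \, d\mu_{2} = f_{\mu_{1} - \mu_{2}}^{*}(f),
\]
which is a member of \(\mathscr{N}\). Since \(\mathscr{N}\) is a linear subspace, \(f_{\nu_{X}}^{*} + f_{\mu_{1} - \mu_{2}}^{*} = f_{\nu_{X} + \mu_{1} - \mu_{2}}^{*} \in \mathscr{N}\) as well.

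It then remains to estimate the discarded tail. Subtracting \(f_{\nu_{X} + \mu_{1} - \mu_{2}}^{*}\) from \(\phi\) leaves the functional \(f \mapsto \int_{W \smallsetminus W_{\delta}} \tilde{f} \, d\nu_{W}\), and for \(\|f\|_{\textrm{max}} \le 1\) the integrand satisfies \(\|\tilde{f}(x, x')\|_{E} = \|f(x) - f(x')\|_{E} / d(x, x') \le \mathcal{L}_{X, E}(f) \le 1\). Hence the \(\lip(X, E)^{*}\)-norm of this remainder is at most \(|\nu|(\{(x, x') : 0 < d(x, x') < \delta\})\). As \(\delta \downarrow 0\) these sets decrease to the empty set, so by continuity from above of the finite variation measure \(|\nu|\) the bound tends to \(0\). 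Therefore \(\phi\) is a norm limit of elements of \(\mathscr{N}\), which proves the density.

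I expect the main obstacle to be precisely the control of the contribution near the diagonal \(\Delta\): the weight \(1/d(x, x')\) is singular there and the pushforward construction degenerates, so the argument hinges on the uniform bound \(\|\tilde{f}\|_{E} \le \mathcal{L}_{X, E}(f)\) valid on the unit ball, which trades the singular weight for the finite total variation of \(\nu\) over a shrinking neighborhood of \(\Delta\). A secondary point deserving care is the justification of the pushforward and change-of-variables identity for the pairing of an \(E\)-valued continuous function against an \(E^{*}\)-valued measure; this is routine but should be confirmed on simple functions before passing to the general case.
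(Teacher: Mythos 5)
Your proposal is correct and follows essentially the same route as the paper's proof: a Hahn--Banach representation of the functional by a measure on \(K_{\omega}\), truncation to the region where \(d(x, x') \ge \delta\) so that the weight \(1/d\) is bounded, pushforward along the two coordinate projections to produce measures on \(X\), and a tail estimate via the variation of the representing measure near the diagonal. The only cosmetic difference is that you exploit compactness of \(X\) directly (so \(W_{\delta}\) is already compact), whereas the paper additionally intersects with the balls \(X_{n}\) before letting \(n \to \infty\).
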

%%%%%%%%%%%%%%%%%%%%%%%%%%%%%% end prop

%%%%%%%%%%%%%%%%%%%%%%%%%%%%%% begin proof
\begin{proof}
	Let \(f^{*} \in \lip(X, E)^{*}\) be arbitrary. 
	By using the Hahn-Banach theorem, 
	there exists \(\mu \in M(K_{\omega}, E^{*})\) 
	such that 
	\begin{align}
		f^{*}f = \int_{K_{\infty}} \tilde{f} \, d\mu \qquad (f \in \lip(X, E)). 
	\end{align}
Fix a point \(x_{0} \in X\). 
For each  \(n = 1, 2, 3, \ldots\), 
set 
\begin{align}
	X_{n} \coloneqq \{x \in X : d(x, x_{0}) \le n\} 
\end{align}
and
\begin{align}
	W_{n} \coloneqq \{(x, x') \in W : x, x' \in X_{n} \ \mbox{and} \ d(x, x') \geq 1/(n+1)\}. 
\end{align}
Put \(K_{n} \coloneqq X_{n} \cup W_{n}\). 
Then, 
\(\{K_{n}\}_{n =1}^{\infty}\) is an increasing sequence of compact sets whose union is \(K\). 
For each \(n = 1, 2, 3, \ldots\), 
put 
\begin{align}
	f_{n}^{*}f 
	\coloneqq  
	\int_{K_{n}} \tilde{f}(\boldsymbol{x}) \, d\mu(\boldsymbol{x}) \qquad (f \in \lip(X, E)). 
\end{align}
Then, 
for any \(f \in \lip(X, E)\), 
we have 
\begin{align}
	\begin{split}
		|f^{*}f - f_{n}^{*}f| 
		&= \left|\int_{K_{\omega}} \tilde{f}(\boldsymbol{x}) \, d\mu(\boldsymbol{x}) 
		- \int_{K_{n}} \tilde{f} (\boldsymbol{x}) \, d\mu(\boldsymbol{x})\right| \\
		&= \left|\int_{K_{\omega} \smallsetminus K_{n}} \tilde{f}(\boldsymbol{x}) \, d\mu(\boldsymbol{x})\right| 
		\le \int_{K_{\omega} \smallsetminus K_{n}} \|\tilde{f}(\boldsymbol{x})\|_{E} \, d|\mu|(\boldsymbol{x}) \\
		&\le |\mu|(K \smallsetminus K_{n}) \left(\sup_{\boldsymbol{x} \in K_{\omega}} \|\tilde{f}\|_{E}\right) 
		\le |\mu|(K \smallsetminus K_{n}) \|f\|_{\max}, 
	\end{split}
\end{align}
where \(|\mu|\) is variation of \(\mu\). 
Hence we have 
\begin{align}
	\|f_{n}^{*} - f^{*}\|_{\lip (X, E)^{*}} \le |\mu|(K \smallsetminus K_{n}). 
\end{align}
Since \(|\mu|\) is countably additive and \(\bigcup_{n=1}^{\infty} K_{n} = K\), 
we have \(\|f_{n}^{*} - f^{*}\|_{\lip (X, E)^{*}} \rightarrow 0\). 
We next show that \(f_{n}^{*} \in \mathscr{N}\). 
For any \(f \in \lip(X, E)\), 
we have 
\begin{align}
	\begin{split}
		f_{n}^{*} f
		&= \int_{X_{n}} f (x) \, d\mu(x) + \int_{W_{n}} \tilde{f}(x, x') \, d\mu(x, x') \\
		&= \int_{X_{n}} f(x) \, d\mu(x) 
		+ \int_{W_{n}} \frac{f(x)}{d(x, x')} \, d\mu(x, x') - \int_{W_{n}} \frac{f(x')}{d(x, x')} \, d\mu(x, x'). 
	\end{split}
\end{align}
Since \(d(x, x')\) is bounded away from zero on \(W_{n}\), 
there exist \(\mu_{1}, \mu_{2} \in M(X, E^{*})\) with support contained in \(X_{n}\) 
such that 
\begin{align}
	\int_{W_{n}} \frac{f(x)}{d(x, x')} \, d\mu(x, x') 
	= \int_{X_{n}} f (x) \, d\mu_{1}(x) \\
	\intertext{and}
	\int_{W_{n}} \frac{f(x')}{d(x, x')} \, d\mu(x, x') 
	= \int_{X_{n}} f (x) \, d\mu_{2}(x). 
\end{align} 
Hence we have 
\begin{align}
	f_{n}^{*} f = \int_{X_{n}} f(x) \, d(\mu + \mu_{1} - \mu_{2})(x).
\end{align} 
Therefore, 
we have \(f_{n}^{*} \in \mathscr{N}\). 
\end{proof}
%%%%%%%%%%%%%%%%%%%%%%%%%%%%%% end proof

Next, we prove the following proposition: 
%%%%%%%%%%%%%%%%%%%%%%%%%%%%%% begin prop
\begin{prop}\label{Prop2-6}
	If \(\lip (X)\) separates points uniformly, 
	then \(E^{*} \otimes \{\tau_{x}\}_{x \in X}\) is dense in \(\mathscr{N}\). 
\end{prop}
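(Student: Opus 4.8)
The plan is to exhibit every element of $\mathscr{N}$ as a norm-limit of finitely supported functionals of the form $\sum_{k} e_{k}^{*} \otimes \tau_{x_{k}}$. Fix $f_{\mu}^{*} \in \mathscr{N}$ with $\mu \in M(X, E^{*})$, and let $\varepsilon > 0$; we may assume $\mu \neq 0$, so that the variation $|\mu|$ is a nonzero finite positive regular Borel measure with $|\mu|(X) > 0$. The whole estimate rests on the elementary remark that any $f \in \lip(X, E)$ with $\|f\|_{\textrm{max}} \le 1$ obeys
\begin{align}
\|f(x) - f(x')\|_{E} \le \mathcal{L}_{X, E}(f)\, d(x, x') \le d(x, x') \qquad (x, x' \in X),
\end{align}
so that such an $f$ varies little across a set of small diameter.

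First I would invoke compactness of $X$ to build, for a parameter $\delta > 0$ to be fixed at the end, a finite Borel partition $X = A_{1} \sqcup \cdots \sqcup A_{N}$ with $\operatorname{diam}(A_{k}) < \delta$ for each $k$ (cover $X$ by finitely many balls of radius $\delta/2$ and disjointify them). Picking a point $x_{k} \in A_{k}$ and setting $e_{k}^{*} \coloneqq \mu(A_{k}) \in E^{*}$, I would form the candidate approximant
\begin{align}
\nu \coloneqq \sum_{k=1}^{N} e_{k}^{*} \otimes \tau_{x_{k}} \in E^{*} \otimes \{\tau_{x}\}_{x \in X},
\end{align}
whose action unwinds to $\nu(f) = \sum_{k=1}^{N} \mu(A_{k})\bigl(f(x_{k})\bigr) = \sum_{k=1}^{N} \int_{A_{k}} f(x_{k}) \, d\mu(x)$ for $f \in \lip(X, E)$.

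Next I would estimate $f_{\mu}^{*} - \nu$ in the norm of $\lip(X, E)^{*}$. Splitting the defining integral over the partition and subtracting gives
\begin{align}
f_{\mu}^{*} f - \nu(f) = \sum_{k=1}^{N} \int_{A_{k}} \bigl(f(x) - f(x_{k})\bigr) \, d\mu(x) \qquad (f \in \lip(X, E)).
\end{align}
Applying the standard variation bound $\bigl|\int_{X} g \, d\mu\bigr| \le \int_{X} \|g\|_{E} \, d|\mu|$ for $E^{*}$-valued measures together with the displayed Lipschitz inequality, I obtain, for every $f$ with $\|f\|_{\textrm{max}} \le 1$,
\begin{align}
|f_{\mu}^{*} f - \nu(f)| \le \sum_{k=1}^{N} \int_{A_{k}} \operatorname{diam}(A_{k}) \, d|\mu|(x) \le \delta\, |\mu|(X).
\end{align}
Taking the supremum over the unit ball of $\lip(X, E)$ yields $\|f_{\mu}^{*} - \nu\|_{\lip(X, E)^{*}} \le \delta\, |\mu|(X)$, and choosing $\delta < \varepsilon / |\mu|(X)$ completes the approximation.

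The step I expect to demand the most care is the measure-theoretic bookkeeping for the $E^{*}$-valued measure $\mu$ paired against the $E$-valued function $f$: one must justify that the scalar integral $\int_{X} f \, d\mu$ is additive over the finite Borel partition and is controlled by the variation $|\mu|$ exactly as in the scalar case, and one must verify that $\mu(A_{k}) \otimes \tau_{x_{k}}$ genuinely represents the functional $f \mapsto \int_{A_{k}} f(x_{k})\, d\mu(x)$. Notably, once these facts are granted, the estimate invokes only the Lipschitz constant of $f$; the hypothesis that $\lip(X)$ separates points uniformly enters the surrounding development (through \Cref{Prop2-3} and the scalar density of $\textrm{L.h.}_{\mathbb{K}}\{\tau_{x}\}_{x \in X}$) rather than this particular approximation.
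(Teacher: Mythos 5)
Your proposal is correct and follows essentially the same route as the paper: both construct the approximant $\sum_{k}\mu(A_{k})\otimes\tau_{x_{k}}$ from a finite disjointified cover by small sets, and both bound $|f_{\mu}^{*}f-\nu(f)|$ by pairing the Lipschitz estimate $\|f(x)-f(x_{k})\|_{E}\le d(x,x_{k})$ with the variation $|\mu|$ (the paper covers only $\operatorname{supp}\mu$ rather than all of $X$, which is immaterial). Your closing observation that the uniform-separation hypothesis is not used in the approximation itself, but only to place $E^{*}\otimes\{\tau_{x}\}_{x\in X}$ inside the relevant dual, matches the paper as well.
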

%%%%%%%%%%%%%%%%%%%%%%%%%%%%%% end prop

%%%%%%%%%%%%%%%%%%%%%%%%%%%%%% begin proof
\begin{proof}
	Let \(f_{\mu}^{*} \in \mathscr{N}\) and \(\gamma > 0\) be arbitrary. 
	\(\operatorname{supp} \mu\) denotes the support of \(\mu\). 
	By noting that \(X\) is compact 
	and that \(\operatorname{supp} \mu\) is closed in \(X\), 
	there exist \(x'_{1}, \ldots, x'_{N} \in \operatorname{supp} \mu\) 
	such that \(\operatorname{supp} \mu \subset \bigcup_{j=1}^{N} B_{j}\), 
	where \(B_{j} \coloneqq \{x \in X : d(x, x'_{j}) < \gamma/(4|\mu|(\operatorname{supp} \mu))\}\). 
	Set 
	\begin{align}
		\begin{split}
			A_{1} &\coloneqq B_{1}, \\ 
			A_{j} &\coloneqq B_{j} \smallsetminus \bigcup_{i=1}^{j-1}B_{i} \ (j=2, \ldots, N).
		\end{split}
	\end{align}
Then \(\{A_{j}\}_{j=1}^{N}\) is a disjoint collection of Borel sets such that 
\(\operatorname{supp} \mu \subset \bigcup_{j=1}^{N} A_{j}\) 
and \(A_{j} \subset B_{j} \ (j=1, \ldots, N)\). 
We may assume that \(A_{j} \neq \emptyset \ (j=1, \ldots, N)\) without loss generality. 
Then, 
for each \(j = 1, \dots, N\), 
we can choose \(x_{j} \in A_{j}\). 
Set 
\begin{align}
	f^{*} \coloneqq \sum_{j=1}^{N} \mu(A_{j}) \otimes \tau_{x_{j}}. 
\end{align}
Then \(f^{*} \in E \otimes \{\tau_{x}\}_{x \in X}\). 
Let \(f \in \lip(X, E)\) with \(\|f\|_{\max} \le 1\) be arbitrary.  
Set 
\begin{align}
	g \coloneqq \sum_{j=1}^{N} f(x_{j})\chi_{A_{j}}, 
\end{align}
where \(\chi_{A_{j}}\) is the characteristic function on \(A_{j}\). 
For any \(x \in \operatorname{supp} \mu\), 
there exists a unique \(i \in \{1, \ldots, N\}\) such that \(x \in A_{i}\). 
By noting that \(A_{i} \subset B_{i}\), 
we have 
\begin{align}
	\begin{split}
		\|g(x) - f(x)\|_{E} 
		&= \|f(x_{i}) - f(x)\|_{E} 
		\le d(x_{i}, x) \\
		&\le d(x_{i}, x'_{i}) + d(x'_{i}, x) 
		<\frac{\gamma}{2 |\mu|(\operatorname{supp} \mu)}. 
	\end{split}
\end{align}
Therefore we have 
\begin{align}
	\sup_{x \in \operatorname{supp} \mu}\|g(x) - f(x)\|_{E} \le \frac{\gamma}{2 |\mu|(\operatorname{supp} \mu)}. 
\end{align}
Since 
\begin{align}
	\int_{X} g(x) \, d\mu(x) 
	= \sum_{j=1}^{N} (\mu(A_{j})) (f(x_{j})) 
	= \sum_{j=1}^{N} (\mu(A_{j}) \otimes \tau_{x_{j}}) (f) = f^{*} f, 
\end{align}
we have 
\begin{align}
	\begin{split}
		|f^{*}f - f_{\mu}^{*}f|
		&= \left|\int_{X} g(x) \, d\mu(x) - \int_{X} f \, d\mu(x)\right| 
		= \left|\int_{X} (g - f)(x) \, d\mu(x)\right| \\
		&\le \int_{X} \|g(x) - f(x)\|_{E} \, d|\mu|(x)
		= \int_{\operatorname{supp} \mu} \|g(x) - f(x)\|_{E} \ d|\mu|(x) \\
		&\le |\mu|(\operatorname{supp} \mu) \left(\sup_{x \in \operatorname{supp} \mu}\|g(x) - f(x)\|_{E}\right) 
		\le \frac{\gamma}{2}. 
	\end{split}
\end{align}
Hence we have 
\begin{align}
	\|f^{*} - f_{\mu}^{*}\|_{\lip (X, E)^{*}} < \gamma. 
\end{align}
Therefore, 
\(E \otimes \{\tau_{x}\}_{x \in X}\) is dense in \(\mathscr{N}\). 
\end{proof}
%%%%%%%%%%%%%%%%%%%%%%%%%%%%%% end proof 

\Cref{Prop2-5} and \Cref{Prop2-6} imply that 
\(E^{*} \otimes \{\tau_{x}\}_{x \in X}\) is dense in \(\lip (X, E)^{*}\) 
if \(\lip (X)\) separates points uniformly. 
Therefore, \(E^{*} \otimes \lip (X)^{*}\) is dense in \(\lip (X, E)^{*}\) 
if \(\lip (X)\) separates points uniformly.
%%%%%%%%%%%%%%%%%%%%%%%%%%%%%%%%%%%%%%%%%%%%%%%%%%%%%%%%%%%%%% end preliminaries

%%%%%%%%%%%%%%%%%%%%%%%%%%%%%%%%%%%%%%%%%%%%%%%%%%%%%%%%%%%%%% begin PMT
\section{Proof of Main Theorem}
In this section, we prove \Cref{thm1}. 
To prove it, let us introduce several notations.    
For any Banach spaces \(B_{1}\) and \(B_{2}\), 
\(B_{1} \otimes_{\varepsilon} B_{2}\) and \(B_{1} \otimes_{\pi} B_{2}\) 
denote the algebraic tensor product of \(B_{1}\) and \(B_{2}\) 
with injective cross norm \(\varepsilon\) and projective cross norm \(\pi\), respectively. 
\(B_{1} \widehat{\otimes}_{\varepsilon} B_{2}\) 
and \(B_{1} \widehat{\otimes}_{\pi} B_{2}\) 
denote the 
completion of \(B_{1} \otimes_{\varepsilon} B_{2}\) 
and \(B_{1} \otimes_{\pi} B_{2}\) 
with respect to the cross norm \(\varepsilon\) and \(\pi\), respectively.

Also, 
let us introduce several conditions as follows: 
\begin{enumerate}
	\renewcommand{\labelenumi}{(A\arabic{enumi})}
	\item 
	\(\lip (X)\) separates the points uniformly. 
	\label{con1}
	\item
	Either \(\lip(X)^{*}\) or \(E^{*}\) has the approximation property. 
	\label{con2}
\end{enumerate}
By \Cref{Prop2-3}, 
the condition (A\ref{con1}) implies that 
\(\lip(X)^{*}\) has Radon-Nikod\'{y}m property. 

By using the \cite[Theorem 5.33]{Rya}, 
we get following lemma: 

%%%%%%%%%%%%%%%%%%%%%%%%%%%%%% begin lem
\begin{lem}\label{Lem3-1}
	Suppose (A\ref{con1}) and (A\ref{con2}). 
	Then  
	\(E^{*} \widehat{\otimes}_{\pi} \lip(X)^{*}\) is isometrically isomorphic 
	to \(\left( E \widehat{\otimes}_{\varepsilon} \lip(X) \right)^{*}\), 
	via the map 
	\(
	T \colon E^{*} \widehat{\otimes}_{\pi} \lip(X)^{*} \rightarrow \left(E \widehat{\otimes}_{\varepsilon} \lip(X) \right)^{*}
	\) 
	defined by  
	\begin{align}\label{eq:3-1-1}
	\left(T\left(\sum_{j=1}^{N} e_j^{*} \otimes h_j^{*}\right)\right)\left(\sum_{i=1}^{M} e_{i} \otimes h_{i}\right) 
	= \sum_{j=1}^{N} \sum_{i=1}^{M} (e_{j}^{*} e_{i}) (h_{j}^{*} h_{i}) 
	\end{align}
	for any \(\sum_{j=1}^{n} e_j^{*} \otimes h_j^{*} \in E^{*} \otimes_{\pi} \lip (X)^{*}\) 
	and \(\sum_{i=1}^{m} e_{i} \otimes h_{i} \in E \otimes_{\varepsilon} \lip (X)\).
\end{lem}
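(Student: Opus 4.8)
The plan is to deduce the statement directly from the general duality theorem \cite[Theorem 5.33]{Rya}, which asserts that for Banach spaces \(A\) and \(B\) such that \(A^{*}\) has the Radon-Nikod\'{y}m property and either \(A^{*}\) or \(B^{*}\) has the approximation property, the canonical map \(A^{*} \widehat{\otimes}_{\pi} B^{*} \rightarrow \left(A \widehat{\otimes}_{\varepsilon} B\right)^{*}\) is an isometric isomorphism. The entire content of the lemma is then to arrange the hypotheses so that the factor whose dual must carry the Radon-Nikod\'{y}m property is \(\lip(X)\), and to check that the abstract isomorphism coincides with the explicit pairing \(T\) of \eqref{eq:3-1-1}.

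First I would record that the hypotheses of the cited theorem hold. By \Cref{Prop2-3}, assumption (A\ref{con1}) makes \(\lip(X)^{*}\) separable, hence (as already noted after the statement of the conditions) \(\lip(X)^{*}\) has the Radon-Nikod\'{y}m property; and assumption (A\ref{con2}) gives that \(\lip(X)^{*}\) or \(E^{*}\) has the approximation property. The only delicate point is that \cite[Theorem 5.33]{Rya} imposes the Radon-Nikod\'{y}m hypothesis on one distinguished factor, whereas here it is \(\lip(X)^{*}\) that is controlled. To place \(\lip(X)\) in that slot I would invoke the commutativity of the two tensor norms: the flip map induces isometric isomorphisms \(E \widehat{\otimes}_{\varepsilon} \lip(X) \cong \lip(X) \widehat{\otimes}_{\varepsilon} E\) and \(E^{*} \widehat{\otimes}_{\pi} \lip(X)^{*} \cong \lip(X)^{*} \widehat{\otimes}_{\pi} E^{*}\). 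Applying \cite[Theorem 5.33]{Rya} with \(A = \lip(X)\) and \(B = E\) then yields an isometric isomorphism \(\lip(X)^{*} \widehat{\otimes}_{\pi} E^{*} \cong \left(\lip(X) \widehat{\otimes}_{\varepsilon} E\right)^{*}\), since \(A^{*} = \lip(X)^{*}\) has the Radon-Nikod\'{y}m property and \(A^{*}\) or \(B^{*}\) (that is, \(\lip(X)^{*}\) or \(E^{*}\)) has the approximation property. Transporting this back through the two flip isomorphisms produces an isometric isomorphism from \(E^{*} \widehat{\otimes}_{\pi} \lip(X)^{*}\) onto \(\left(E \widehat{\otimes}_{\varepsilon} \lip(X)\right)^{*}\).

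It then remains to verify that this isomorphism is exactly the map \(T\) described by \eqref{eq:3-1-1}. The isomorphism supplied by \cite[Theorem 5.33]{Rya} is, on elementary tensors, the canonical duality bracket \(\langle e^{*} \otimes h^{*}, e \otimes h\rangle = (e^{*}e)(h^{*}h)\); by bilinearity this agrees with the right-hand side of \eqref{eq:3-1-1} on all of \(E^{*} \otimes_{\pi} \lip(X)^{*}\) paired with \(E \otimes_{\varepsilon} \lip(X)\). Since \(E^{*} \otimes_{\pi} \lip(X)^{*}\) is dense in \(E^{*} \widehat{\otimes}_{\pi} \lip(X)^{*}\) and \(E \otimes_{\varepsilon} \lip(X)\) is dense in \(E \widehat{\otimes}_{\varepsilon} \lip(X)\), the formula \eqref{eq:3-1-1} together with the continuity of \(T\) determines \(T\) uniquely on the completions, so the two maps coincide and \(T\) is the claimed isometric isomorphism. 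I expect the main obstacle to be organisational rather than analytic: one must confirm that the two flip isomorphisms intertwine the canonical duality pairings so that no extraneous constant or conjugation is introduced, and that \cite[Theorem 5.33]{Rya} is invoked with the factors in the correct positions. Once this bookkeeping of the tensor-norm symmetries is settled, the identification of the abstract isomorphism with the explicit formula \eqref{eq:3-1-1} is routine.
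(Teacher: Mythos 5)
Your proposal is correct and follows essentially the same route as the paper, which simply derives the lemma by citing \cite[Theorem 5.33]{Rya} without further elaboration; you merely make explicit the routine verifications (that (A1) yields the Radon--Nikod\'{y}m property of \(\lip(X)^{*}\) via \Cref{Prop2-3}, that (A2) supplies the approximation property, and that the canonical duality pairing agrees with \eqref{eq:3-1-1} by density). The bookkeeping with the flip isomorphisms is harmless but likely unnecessary, since the cited theorem is symmetric in its two factors.
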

%%%%%%%%%%%%%%%%%%%%%%%%%%%%%% end lem

By \Cref{Lem3-1}, 
\(\left( E \widehat{\otimes}_{\varepsilon} \lip(X) \right)^{**}\) is isometrically isomorphic to 
\(\left(E^{*} \widehat{\otimes}_{\pi} \lip(X)^{*}\right)^{*}\). 

%%%%%%%%%%%%%%%%%%%%%%%%%%%%%% begin lem
\begin{lem}\label{Lem3-2}
	The mapping 
	\(U_{0} \colon E \otimes_{\varepsilon} \lip(X) \rightarrow \lip (X, E)\) 
	defined by 
	\begin{align}\label{eq:3-2-1}
	(U_{0} z) (x) \coloneqq \sum_{i=1}^{M} h_{i} (x) e_{i} 
	\qquad 
	\left(
	z = \sum_{i=1}^{M} e_{i} \otimes h_{i}
	\in E \otimes_{\varepsilon} \lip(X), 
	x \in X
	\right)
	\end{align}
	is a linear isometry. 
\end{lem}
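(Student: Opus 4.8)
The plan is to establish, in turn, that $U_{0}$ is well defined and linear, that its range lies in $\lip(X, E)$, and that it preserves norms in both directions. I would get well-definedness and linearity at one stroke from the universal property of the algebraic tensor product: the map $(e, h) \mapsto \bigl(x \mapsto h(x) e\bigr)$ from $E \times \lip(X)$ into the $E$-valued functions on $X$ is bilinear, so it factors through $E \otimes \lip(X)$ and produces a well-defined linear map whose values are exactly those in \eqref{eq:3-2-1}. To see that $U_{0}z \in \lip(X, E)$ for $z = \sum_{i=1}^{M} e_{i} \otimes h_{i}$, I would estimate, for $x \neq x'$,
\[
\frac{\|(U_{0}z)(x) - (U_{0}z)(x')\|_{E}}{d(x, x')}
\le \sum_{i=1}^{M} \|e_{i}\|_{E}\, \frac{|h_{i}(x) - h_{i}(x')|}{d(x, x')},
\]
and let $d(x, x') \to 0$; since each $h_{i} \in \lip(X)$, every summand tends to $0$, so $U_{0}z$ is little Lipschitz (its continuity and boundedness on the compact space $X$ being clear).

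The core of the isometry is the identity
\[
(e^{*} \otimes h^{*})(U_{0}z) = \sum_{i=1}^{M} (e^{*} e_{i})(h^{*} h_{i})
\qquad (e^{*} \in E^{*},\ h^{*} \in \lip(X)^{*}),
\]
which I would read off from \eqref{eq:2-2-1} after computing $e^{*} \circ (U_{0}z) = \sum_{i} (e^{*} e_{i}) h_{i}$. The right-hand side is precisely the dual pairing of $z$ with $e^{*} \otimes h^{*}$, so the injective norm $\|z\|_{\varepsilon}$ is the supremum of $|(e^{*} \otimes h^{*})(U_{0}z)|$ over $e^{*}, h^{*}$ in the respective unit balls. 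One inequality is then immediate from \Cref{Prop2-2}: since $\|e^{*} \otimes h^{*}\|_{\lip(X, E)^{*}} = \|e^{*}\|_{E^{*}} \|h^{*}\|_{\lip(X)^{*}}$, each test value is bounded by $\|U_{0}z\|_{\max}$, whence $\|z\|_{\varepsilon} \le \|U_{0}z\|_{\max}$.

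For the reverse inequality I would exhibit, for each of the two quantities in $\|U_{0}z\|_{\max} = \max\{\|U_{0}z\|_{C(X, E)},\ \mathcal{L}_{X, E}(U_{0}z)\}$, a pair $(e^{*}, h^{*})$ of unit functionals realizing it. To recover the sup-norm I take $h^{*} = \tau_{x}$ (which satisfies $\|\tau_{x}\|_{\lip(X)^{*}} \le 1$ because $|g(x)| \le \|g\|_{\max}$) together with a Hahn--Banach functional $e^{*}$ of norm $1$ with $e^{*}((U_{0}z)(x)) = \|(U_{0}z)(x)\|_{E}$; the identity above then gives $(e^{*} \otimes \tau_{x})(U_{0}z) = \|(U_{0}z)(x)\|_{E}$. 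To recover the Lipschitz constant I take $h^{*} = (\tau_{x} - \tau_{x'})/d(x, x')$ (whose norm is $\le 1$ since $|g(x) - g(x')|/d(x, x') \le \mathcal{L}_{X, \mathbb{K}}(g) \le \|g\|_{\max}$) together with an $e^{*}$ norming $(U_{0}z)(x) - (U_{0}z)(x')$, which yields $(e^{*} \otimes h^{*})(U_{0}z) = \|(U_{0}z)(x) - (U_{0}z)(x')\|_{E}/d(x, x')$. Taking suprema over $x$ and over $x \neq x'$ gives both $\|U_{0}z\|_{C(X, E)} \le \|z\|_{\varepsilon}$ and $\mathcal{L}_{X, E}(U_{0}z) \le \|z\|_{\varepsilon}$, hence $\|U_{0}z\|_{\max} \le \|z\|_{\varepsilon}$, completing the isometry.

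The step needing the most care is this reverse inequality: one must select the scalar testing functionals $\tau_{x}$ and $(\tau_{x} - \tau_{x'})/d(x, x')$, confirm they lie in the unit ball of $\lip(X)^{*}$, and combine them through the core identity with Hahn--Banach norming functionals on $E$ so that the two components of the $\max$-norm are recovered separately. The forward inequality, by contrast, is a direct consequence of \Cref{Prop2-2}. I note that this lemma uses neither condition \eqref{con1} nor \eqref{con2}; those hypotheses enter only later, when identifying the dual.
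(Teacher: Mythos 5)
Your proof is correct. It differs from the paper's mainly in how the injective norm is handled. The paper gives a direct chain of equalities: it interchanges the supremum over \(x \in X\) (resp.\ over pairs \(x \neq x'\)) with the supremum over the unit ball of \(E^{*}\) to get \(\|U_{0}z\|_{C(X,E)} = \sup_{\|e^{*}\|_{E^{*}} \le 1} \bigl\| \sum_{i} (e^{*} e_{i}) h_{i} \bigr\|_{C(X)}\) and the analogous identity for \(\mathcal{L}_{X,E}(U_{0}z)\), then pulls the maximum inside to conclude \(\|U_{0}z\|_{\max} = \sup_{\|e^{*}\|_{E^{*}} \le 1} \bigl\| \sum_{i} (e^{*} e_{i}) h_{i} \bigr\|_{\max} = \varepsilon(z)\), using the one-variable description of the injective norm as a supremum over \(E^{*}\) alone of the \(\lip(X)\)-norm. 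You instead use the bilinear description \(\varepsilon(z) = \sup |\sum_{i} (e^{*} e_{i})(h^{*} h_{i})|\) and prove two inequalities: \(\varepsilon(z) \le \|U_{0}z\|_{\max}\) via \Cref{Prop2-2}, and the reverse by exhibiting the unit-ball functionals \(\tau_{x}\) and \((\tau_{x} - \tau_{x'})/d(x,x')\) in \(\lip(X)^{*}\) together with Hahn--Banach norming functionals on \(E\). Unwound, your reverse inequality is exactly the paper's interchange of suprema, so the underlying duality is the same; what your version buys is that it makes explicit the point evaluations and difference quotients (which reappear later in the paper), and it records two points the paper leaves tacit, namely that \(U_{0}\) is well defined on the tensor product and that its range actually lands in \(\lip(X,E)\) rather than merely in \(\Lip(X,E)\). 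Your closing observation that neither (A1) nor (A2) is needed for this lemma is also consistent with the paper.
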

%%%%%%%%%%%%%%%%%%%%%%%%%%%%%% end lem

%%%%%%%%%%%%%%%%%%%%%%%%%%%%%% begin proof
\begin{proof}
	Let 
	\(
	z = \sum_{i=1}^{M} e_{i} \otimes h_{i}
	\in E \otimes_{\varepsilon} \lip(X)
	\). 
	Then we have 
	\begin{align}\label{eq:3-2-2}
	\begin{split}
	\|U_{0} z\|_{C(X, E)} 
	&=\sup_{x \in X} \left\| \sum_{i=1}^{M} h_{i}(x) e_{i} \right\|_{E}
	=\sup_{x \in X} 
	\sup_{\begin{subarray}{c}e^{*} \in E^{*} \\ \| e^{*} \|_{E^{*}} \le 1\end{subarray}} 
	\left| e^{*} \left( \sum_{i=1}^{M} h_{i}(x) e_{i} \right) \right| \\
	&=\sup_{\begin{subarray}{c}e^{*} \in E^{*} \\ \| e^{*} \|_{E^{*}} \le 1\end{subarray}} 
	\sup_{x \in X}
	\left|\sum_{i=1}^{M} (e^{*} e_{i}) h_{i}(x)\right|
	=\sup_{\begin{subarray}{c}e^{*} \in E^{*} \\ \| e^{*} \|_{E^{*}} \le 1\end{subarray}} 
	\left\|\sum_{i=1}^{M} (e^{*} e_{i}) h_{i}\right\|_{C(X)}
	\end{split}	
	\end{align}
	and 
	\begin{align}\label{eq:3-2-3}
	\begin{split}
	\mathcal{L}_{X, E}(U_{0} z) 
	&=\sup_{\begin{subarray}{c} x, x' \in X \\ x \neq x'\end{subarray}} 
	\frac{\left\| \sum_{i=1}^{M} h_{i}(x) e_{i} - \sum_{i=1}^{M} h_{i}(x') e_{i} \right\|_{E}}{d (x,x')} \\
	&=\sup_{\begin{subarray}{c} x, x' \in X \\ x \neq x'\end{subarray}} 
	\sup_{\begin{subarray}{c}e^{*} \in E^{*} \\ \| e^{*} \|_{E^{*}} \le 1\end{subarray}} 
	\frac{\left| e^{*} \left(\sum_{i=1}^{M} h_{i}(x) e_{i} - \sum_{i=1}^{M} h_{i}(x') e_{i} \right) \right|}
	{d (x,x')}\\
	&=\sup_{\begin{subarray}{c}e^{*} \in E^{*} \\ \| e^{*} \|_{E^{*}} \le 1\end{subarray}}
	\sup_{\begin{subarray}{c} x, x' \in X \\ x \neq x'\end{subarray}}
	\frac{\left|\sum_{i=1}^{M} (e^{*} e_{i}) h_{i}(x)-\sum_{i=1}^{M} (e^{*} e_{i}) h_{i}(x')\right|}
	{d (x,x')}\\
	&=\sup_{\begin{subarray}{c}e^{*} \in E^{*} \\ \| e^{*} \|_{E^{*}} \le 1\end{subarray}} 
	\mathcal{L}_{X, \mathbb{K}}\left(\sum_{i=1}^{M} (e^{*} e_{i}) h_{i}\right). 
	\end{split}
	\end{align}
	By using the equations \eqref{eq:3-2-2} and \eqref{eq:3-2-3}, 
	we have 
	\begin{align}
	\begin{split}
	\left\| U_{0} z \right\|_{\textrm{max}}
	&=\max\left\{
	\sup_{\begin{subarray}{c}e^{*} \in E^{*} \\ \| e^{*} \|_{E^{*}} \le 1\end{subarray}} 
	\left\| \sum_{i=1}^{M} (e^{*} e_{i}) h_{i} \right\|_{C(X)}, 
	\sup_{\begin{subarray}{c}e^{*} \in E^{*} \\ \| e^{*} \|_{E^{*}} \le 1\end{subarray}} 
	\mathcal{L}_{X, \mathbb{K}} \left( \sum_{i=1}^{M} (e^{*} e_{i}) h_{i} \right)\right\} \\
	&=\sup_{\begin{subarray}{c}e^{*} \in E^{*} \\ \| e^{*} \|_{E^{*}} \le 1\end{subarray}} 
	\max \left\{\left\| \sum_{i=1}^{M} (e^{*} e_{i}) h_{i} \right\|_{C(X)}, 
	\mathcal{L}_{X, \mathbb{K}} \left( \sum_{i=1}^{M} (e^{*} e_{i}) h_{i} \right) \right\} \\
	&= \sup_{\begin{subarray}{c}e^{*} \in E^{*} \\ \| e^{*} \|_{E^{*}} \le 1\end{subarray}} 
	\left\| \sum_{i=1}^{M} (e^{*} e_{i}) h_{i} \right\|_{\textrm{max}} 
	=\varepsilon(z).
	\end{split}
	\end{align}
\end{proof}
%%%%%%%%%%%%%%%%%%%%%%%%%%%%%% end proof

By noting that \(E \widehat{\otimes}_{\varepsilon} \lip(X)\) is completion of \(E \otimes_{\varepsilon} \lip(X)\), 
there exists a unique linear isometry 
\(
U \colon E \widehat{\otimes}_{\varepsilon} \lip(X) \rightarrow \lip(X, E)
\)
such that 
\begin{align}\label{eq:3-2-4}
U z = U_{0} z 
\qquad 
\left(z \in E \otimes_{\varepsilon} \lip(X)\right). 
\end{align}

%%%%%%%%%%%%%%%%%%%%%%%%%%%%%% begin lem
\begin{lem}\label{Lem3-3}
	Suppose (A\ref{con1}) and (A\ref{con2}). 
	Then \(U\) is an isometric isomorphism 
	from \(E \widehat{\otimes}_{\varepsilon} \lip(X)\) to \(\lip(X, E)\).
\end{lem}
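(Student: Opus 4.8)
The map \(U\) is already a linear isometry, being the isometric extension of \(U_{0}\) from \Cref{Lem3-2} to the completion, so its range is automatically a closed subspace of \(\lip(X, E)\); the entire content of the lemma is that \(U\) is \emph{surjective}. The plan is to prove surjectivity by showing that \(U\) has dense range, which by the standard annihilator duality \(\overline{\ran U} = {}^{\perp}(\ker U^{*})\) is equivalent to injectivity of the adjoint \(U^{*} \colon \lip(X, E)^{*} \to \bigl(E \widehat{\otimes}_{\varepsilon} \lip(X)\bigr)^{*}\). Since \(U\) is an isometry we have \(\|U^{*}\| \le 1\), a bound that will be decisive for the norm comparison below.

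The first substantive step is to identify \(U^{*}\), through the natural inclusion \(\iota\) of \(E^{*} \otimes \lip(X)^{*}\) into \(\lip(X, E)^{*}\) furnished by \Cref{Prop2-2}, with the isometric isomorphism \(T\) of \Cref{Lem3-1}. Concretely, \(\iota(e^{*} \otimes h^{*})\) is the functional \eqref{eq:2-2-1}, and a direct computation on elementary tensors, combining the definition \eqref{eq:3-2-1} of \(U_{0}\), the pairing \eqref{eq:2-2-1}, and the defining formula \eqref{eq:3-1-1} of \(T\), gives \(\iota(w)(U_{0} z) = (Tw)(z)\) for every \(w \in E^{*} \otimes \lip(X)^{*}\) and \(z \in E \otimes_{\varepsilon} \lip(X)\). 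Since \(U z = U_{0} z\) on the dense subspace \(E \otimes_{\varepsilon} \lip(X)\) and both sides are continuous, this upgrades to the operator identity \(U^{*} \circ \iota = T\) on \(E^{*} \otimes \lip(X)^{*}\).

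From this identity I would extract that \(\iota\) is isometric for the projective norm \(\pi\). The triangle inequality together with \Cref{Prop2-2} gives \(\|\iota(w)\|_{\lip(X,E)^{*}} \le \pi(w)\); conversely, since \(T\) is isometric and \(\|U^{*}\| \le 1\), we get \(\pi(w) = \|Tw\| = \|U^{*}\iota(w)\| \le \|\iota(w)\|_{\lip(X,E)^{*}}\). Hence \(\iota\) is a \(\pi\)-isometry and extends to an isometry \(\widehat{\iota} \colon E^{*} \widehat{\otimes}_{\pi} \lip(X)^{*} \to \lip(X, E)^{*}\). By \Cref{Prop2-4} the range of \(\iota\) is dense, while the isometric image of the complete space \(E^{*} \widehat{\otimes}_{\pi} \lip(X)^{*}\) is closed, so \(\widehat{\iota}\) is an isometric isomorphism onto \(\lip(X, E)^{*}\). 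Passing to the limit in \(U^{*} \circ \iota = T\) gives \(U^{*} \circ \widehat{\iota} = T\), whence \(U^{*} = T \circ \widehat{\iota}^{\,-1}\) is a composition of isometric isomorphisms; in particular \(U^{*}\) is injective, \(U\) has dense range, and being an isometry with closed range \(U\) is therefore onto.

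The main obstacle, and the place where the hypotheses (A\ref{con1}) and (A\ref{con2}) genuinely enter, is the norm identification: showing that the cross norm induced on \(E^{*} \otimes \lip(X)^{*}\) by \(\iota\) is \emph{exactly} \(\pi\) rather than merely dominated by it. This rests on \Cref{Lem3-1}, where the approximation property from (A\ref{con2}) and the Radon--Nikod\'{y}m property coming from (A\ref{con1}) via \Cref{Prop2-3} are used to identify \(E^{*} \widehat{\otimes}_{\pi} \lip(X)^{*}\) with the dual of \(E \widehat{\otimes}_{\varepsilon} \lip(X)\), and on the density statement \Cref{Prop2-4}, which again requires (A\ref{con1}). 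Once these two inputs are secured, everything else is formal duality.
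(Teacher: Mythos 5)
Your proposal is correct and follows essentially the same route as the paper's own proof: both reduce surjectivity to showing that \(U^{*}\) is an isometric isomorphism, by first proving that the norm induced on \(E^{*} \otimes \lip(X)^{*}\) inside \(\lip(X,E)^{*}\) coincides with \(\pi\) (so that, by the density from \Cref{Prop2-4}, \(\lip(X,E)^{*}\) identifies isometrically with \(E^{*} \widehat{\otimes}_{\pi} \lip(X)^{*}\)) and then factoring \(U^{*}\) through the isometry \(T\) of \Cref{Lem3-1}. The only cosmetic differences are that you run the identification map in the opposite direction to the paper's \(J\) and obtain the lower bound \(\pi(w) \le \|\iota(w)\|_{\lip(X,E)^{*}}\) from \(\|U^{*}\| \le 1\) rather than via the explicit inequality \(\varepsilon^{*}(z^{*}) \le \|z^{*}\|_{\lip(X,E)^{*}}\); these are the same estimate in different clothing.
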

%%%%%%%%%%%%%%%%%%%%%%%%%%%%%% end lem

%%%%%%%%%%%%%%%%%%%%%%%%%%%%%% begin proof
\begin{proof}
	It is sufficient to show that \(U\) is surjective. 
	\(\ran(U)\) denotes the range of \(U\). 
	Clearly, 
	\(\ran(U)\) is closed subspace of \(\lip(X, E)\). 
	Here, 
	we show that 
	\(\ran(U)\) is dense in \(\lip(X, E)\). 
	Note that 
	\(E^{*} \otimes \lip (X)^{*}\) is subspace of \(\left(E \widehat{\otimes}_{\varepsilon} \lip(X)\right)^{*}\) 
	(see e.g., \cite[Proposition 3.1(c)]{Rya}), 
	and that \(\|\cdot\|_{\lip(X, E)^{*}}\) is cross norm on \(E^{*} \otimes \lip (X)^{*}\) 
	by \Cref{Prop2-2}. 
	It is easy to see that  
	\begin{align}\label{ineq:Lem3-3}
		\varepsilon^{*}(z^{*}) \le \|z^{*}\|_{\lip (X, E)^{*}} \le \pi(z^{*}) 
		\qquad 
		\left(z^{*} \in E^{*} \otimes \lip (X)^{*}\right), 
	\end{align}
	where \(\varepsilon^{*}\) is dual of the injective cross norm \(\varepsilon\). 
	By \Cref{Lem3-1}, 
	we have 
	\begin{align}\label{eq:Lem3-3}
	\varepsilon^{*}(z^{*}) = \pi(z^{*}) 
	\qquad 
	\left(z^{*} \in E^{*} \otimes \lip(X)^{*}\right). 
	\end{align}
	By combining \eqref{ineq:Lem3-3} and \eqref{eq:Lem3-3}, we have 
	\begin{align}
	\|z^{*}\|_{\lip(X, E)^{*}} = \pi(z^{*})
	\qquad 
	\left(z^{*} \in E^{*} \otimes \lip(X)^{*}\right).
	\end{align}
	Therefore, the mapping  
	\begin{align}
		J_{0} : (E^{*} \otimes \lip(X)^{*}, \|\cdot\|_{\lip(X, E)^{*}}) \ni z^{*} 
		\mapsto 
		z^{*} \in E^{*} \widehat{\otimes}_{\pi} \lip(X)^{*}
	\end{align}
	is an linear isometry. 
	By using the \Cref{Prop2-4}, 
	there exists a unique linear isometry 
	\(J \colon \lip(X, E)^{*} \rightarrow E^{*} \widehat{\otimes}_{\pi} \lip(X)^{*}\)
	such that 
	\begin{align}
	J z^{*} = J_{0} z^{*} \qquad (z^{*} \in E^{*} \otimes \lip (X)^{*}).
	\end{align} 
	Let 
	\(z^{*} = \sum_{j=1}^{n} e_j^{*} \otimes h_j^{*} \in E^{*} \otimes_{\pi} \lip (X)^{*}\) 
	and \(z = \sum_{i=1}^{m} e_{i} \otimes h_{i} \in E \otimes_{\varepsilon} \lip (X)\) 
	be arbitrary.
	Then we have 
	\begin{align}
	\begin{split}
	(T J z^{*})(z) 
	&= (T z^{*})(z) 
	=\sum_{j=1}^{N} \sum_{i=1}^{M} (e_{j}^{*} e_{i}) (h_{j}^{*} h_{i}) \\
	&= \sum_{j=1}^{N} h_{j}^{*} \left(\sum_{i=1}^{M} (e_{j}^{*} e_{i}) h_{i}\right) 
	= \sum_{j=1}^{N} h_{j}^{*} (e_{j}^{*} \circ (U z)) \\
	&= \sum_{j=1}^{N} (e_{j}^{*} \otimes h_{j}^{*})(U z) 
	= z^{*} (U z)
	= (U^{*} z^{*})(z). 
	\end{split}
	\end{align}
	By noting that 
	\(E \widehat{\otimes}_{\varepsilon} \lip(X)\) is completion of \(E \otimes_{\varepsilon} \lip (X)\), 
	and that \(T J z^{*}\) and \(U^{*} z^{*}\) are bounded 
	on \(E \widehat{\otimes}_{\varepsilon} \lip (X)\), 
	the equation 
	\(
	T \widetilde{J} z^{*} = U^{*} z^{*}
	\) 
	holds on \(E \widehat{\otimes}_{\varepsilon} \lip (X)\). 
	By noting that \(E^{*} \otimes \lip (X)^{*}\) is dense in \(\lip (X, E)^{*}\) by \Cref{Prop2-4}, 
	and that \(T J\) and \(U^{*}\) are bounded on \(\lip (X, E)^{*}\), 
	the equation \(T J = U^{*}\) holds on \(\lip (X, E)^{*}\). 
	Since 
	\(
	T \colon E^{*} \widehat{\otimes}_{\pi} \lip(X)^{*} \rightarrow \left(E \widehat{\otimes}_{\varepsilon} \lip(X) \right)^{*}
	\) 
	and 
	\(
	J \colon \lip(X, E)^{*} \rightarrow E^{*} \widehat{\otimes}_{\pi} \lip(X)^{*}
	\) are isometries, 
	respectively, 
	\(
	U^{*} \colon  \lip (X, E)^{*} \rightarrow \left(E \widehat{\otimes}_{\varepsilon} \lip(X) \right)^{*} 
	\) 
	is isometry. 
	Therefore, 
	\(\ran(U)\) is dense in \(\lip(X, E)\).
\end{proof}
%%%%%%%%%%%%%%%%%%%%%%%%%%%%%% end proof

By \Cref{Lem3-3}, 
\(\lip(X, E)^{**}\) is isometrically isomorphic to \(\left(E \widehat{\otimes}_{\varepsilon} \lip(X)\right)^{**}\). 

For any Banach spaces \(B_{1}\) and \(B_{2}\), 
\(\mathscr{B} \left( B_{1}, B_{2} \right)\) denotes the Banach space of all bounded operators 
from \(B_{1}\) into \(B_{2}\) 
with operator norm \(\|\cdot\|_{\mathscr{B} \left( B_{1}, B_{2} \right)}\). 

%%%%%%%%%%%%%%%%%%%%%%%%%%%%%% begin lem
\begin{lem}\label{Lem3-4}
	Suppose (A\ref{con1}). 
	Then \(\left( E^{*} \widehat{\otimes}_{\pi} \lip(X)^{*} \right)^{*}\) 
	is isometrically isomorphic to \(\mathscr{B} \left( E^{*}, \Lip(X) \right)\).
\end{lem}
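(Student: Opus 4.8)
The plan is to factor the desired identification through the standard duality theory of projective tensor products together with Theorem A(c), so that the only nontrivial input is the hypothesis (A1). First I would invoke the canonical isometric identification $\left(B_{1} \widehat{\otimes}_{\pi} B_{2}\right)^{*} \cong \mathscr{B}(B_{1}, B_{2}^{*})$, valid for arbitrary Banach spaces (see, e.g., \cite[Theorem 2.9]{Rya}). Taking $B_{1} = E^{*}$ and $B_{2} = \lip(X)^{*}$, and noting $B_{2}^{*} = \lip(X)^{**}$, this yields an isometric isomorphism
\[
\Psi \colon \left( E^{*} \widehat{\otimes}_{\pi} \lip(X)^{*} \right)^{*} \longrightarrow \mathscr{B}\left(E^{*}, \lip(X)^{**}\right),
\]
where $\Psi(\Phi)$ is the bounded operator determined by $\left(\Psi(\Phi)(e^{*})\right)(h^{*}) = \Phi(e^{*} \otimes h^{*})$ for $e^{*} \in E^{*}$ and $h^{*} \in \lip(X)^{*}$. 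This step requires no hypotheses.

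Next I would bring in the assumption (A1). By Theorem A(c), (A1) guarantees that the map $\Lambda \colon \lip(X)^{**} \to \Lip(X)$, given by $(\Lambda f^{**})(x) = f^{**}(\tau_{x})$, is an isometric isomorphism. I would then use $\Lambda$ to transport the target of $\Psi$ by left composition: define
\[
\Theta \colon \mathscr{B}\left(E^{*}, \lip(X)^{**}\right) \longrightarrow \mathscr{B}\left(E^{*}, \Lip(X)\right), \qquad \Theta(\phi) = \Lambda \circ \phi.
\]
Since $\Lambda$ is a surjective linear isometry, $\Theta$ is a bijection between the two operator spaces, and for each $\phi$ one has
\[
\|\Theta(\phi)\|_{\mathscr{B}(E^{*}, \Lip(X))} = \sup_{\|e^{*}\|_{E^{*}} \le 1} \|\Lambda(\phi(e^{*}))\|_{\max} = \sup_{\|e^{*}\|_{E^{*}} \le 1} \|\phi(e^{*})\|_{\lip(X)^{**}} = \|\phi\|_{\mathscr{B}(E^{*}, \lip(X)^{**})},
\]
so that $\Theta$ is an isometric isomorphism as well.

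Finally I would compose the two maps to obtain the isometric isomorphism $\Theta \circ \Psi \colon \left( E^{*} \widehat{\otimes}_{\pi} \lip(X)^{*} \right)^{*} \to \mathscr{B}(E^{*}, \Lip(X))$. Tracing through the definitions, its explicit action on an elementary tensor is $\left(((\Theta \circ \Psi)(\Phi))(e^{*})\right)(x) = \Phi(e^{*} \otimes \tau_{x})$, since $\left(\Lambda(\Psi(\Phi)(e^{*}))\right)(x) = (\Psi(\Phi)(e^{*}))(\tau_{x}) = \Phi(e^{*} \otimes \tau_{x})$.

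I do not expect a genuine obstacle here: the tensor-product duality is off-the-shelf and the left-composition step is routine. The point deserving the most care is to confirm that $\Psi$ and $\Theta$ are not merely isomorphisms but \emph{isometries} — the first from the cited duality, the second from $\Lambda$ being an isometry — and to record the explicit form of the resulting map in terms of the evaluation functionals $\tau_{x}$, since this concrete description is what is needed for the identification of $\lip(X, E)^{**}$ with $\Lip(X, E^{**})$ promised in the statement of \Cref{thm1}.
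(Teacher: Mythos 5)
Your proposal is correct and follows essentially the same route as the paper: the standard duality \(\left(E^{*} \widehat{\otimes}_{\pi} \lip(X)^{*}\right)^{*} \cong \mathscr{B}\left(E^{*}, \lip(X)^{**}\right)\) from \cite{Rya}, followed by left composition with the isometric isomorphism \(\Lambda \colon \lip(X)^{**} \rightarrow \Lip(X)\) provided by \Cref{ThmHW} under (A\ref{con1}). The explicit formula \(f^{**} \mapsto f^{**}(e^{*} \otimes \tau_{x})\) you record is exactly the one the paper derives and later uses in the proof of \Cref{thm1}.
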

%%%%%%%%%%%%%%%%%%%%%%%%%%%%%% end lem

%%%%%%%%%%%%%%%%%%%%%%%%%%%%%% begin proof
\begin{proof}
	By \cite[p.24]{Rya}, 
	\(\left( E^{*} \widehat{\otimes}_{\pi} \lip(X)^{*} \right)^{*}\) is isometrically isomorphic to \(\mathscr{B} \left( E^{*}, \lip(X)^{**} \right)\), 
	via the map 
	\(\Psi \colon \left(E^{*} \widehat{\otimes}_{\pi} \lip(X)^{*}\right)^{*} \rightarrow \mathscr{B}\left( E^{*}, \lip(X)^{**}\right)\) 
	defined by
	\begin{align}
		\left((\Psi f^{**})(e^{*})\right)(h^{*}) \coloneqq f^{**}(e^{*} \otimes h^{*})
	\end{align}
	for any \(f^{**} \in \left( E^{*} \widehat{\otimes}_{\pi} \lip(X)^{*} \right)^{*}\), \(e^{*} \in E^{*}\) and \(h^{*} \in \lip(X)^{*}\).
	
	By \Cref{ThmHW}, 
	\(\mathscr{B} \left(E^{*}, \lip(X)^{**}\right)\) is 
	isometrically isomorphic to \(\mathscr{B} \left(E^{*}, \Lip(X) \right)\), 
	via the map 
	\(\Phi \colon \mathscr{B}\left(E^{*}, \lip(X)^{**}\right) \rightarrow \mathscr{B} \left(E^{*}, \Lip(X) \right)\) defined by 
	\begin{align}
		\left((\Phi A) (e^{*}) \right)(x) \coloneqq (A e^{*}) (\tau_{x})
	\end{align}
	for any \(A \in \mathscr{B}\left(E^{*}, \lip(X)^{**}\right)\), \(e^{*} \in E^{*}\) and \(x \in X\).
	
	Set \(S \coloneqq \Phi \Psi\). 
	Since \(\Phi\) and \(\Psi\) are isometric isomorphisms, 
	\(S\) is an isometric isomorphism from \(\left( E^{*} \widehat{\otimes}_{\pi} \lip(X)^{*} \right)^{*}\) 
	to \(\mathscr{B} \left( E^{*}, \Lip(X) \right)\). 
	Moreover, 
	\begin{align}\label{eq:3-4-1}
	\left((S f^{**})(e^{*})\right)(x) 
	= \left((\Psi f^{**})(e^{*})\right)(\tau_{x}) 
	= f^{**} (e^{*} \otimes \tau_{x})
	\end{align}
	for any \(f^{**} \in \left( E^{*} \widehat{\otimes}_{\pi} \lip(X)^{*} \right)^{*}\), \(e^{*} \in E^{*}\) 
	and \(x \in X\).
\end{proof}
%%%%%%%%%%%%%%%%%%%%%%%%%%%%%% end proof

%%%%%%%%%%%%%%%%%%%%%%%%%%%%%% begin lem
\begin{lem}\label{Lem3-5}
	\(\mathscr{B}\left(E^{*}, \Lip(X)\right)\) is isometrically isomorphic to \(\Lip(X, E^{**})\).
\end{lem}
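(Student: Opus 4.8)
The plan is to exhibit an explicit candidate map together with its inverse and to check that both are norm-nonincreasing. Define
\(
\Theta \colon \mathscr{B}(E^{*}, \Lip(X)) \rightarrow \Lip(X, E^{**})
\)
by
\[
\bigl((\Theta A)(x)\bigr)(e^{*}) \coloneqq (A e^{*})(x) \qquad \bigl(A \in \mathscr{B}(E^{*}, \Lip(X)),\ x \in X,\ e^{*} \in E^{*}\bigr).
\]
Equivalently, writing \(A^{*} \colon \Lip(X)^{*} \rightarrow E^{**}\) for the adjoint of \(A\) and regarding \(\tau_{x}\) as an element of \(\Lip(X)^{*}\), one has \((\Theta A)(x) = A^{*} \tau_{x}\). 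I would use this reformulation throughout, since it makes the required estimates transparent.

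First I would verify that \(\Theta A \in \Lip(X, E^{**})\) with \(\|\Theta A\|_{\textrm{max}} \le \|A\|_{\mathscr{B}(E^{*}, \Lip(X))}\). For fixed \(x\) the functional \(e^{*} \mapsto (A e^{*})(x)\) is linear and bounded by \(\|A\| \|e^{*}\|_{E^{*}}\), so \((\Theta A)(x) = A^{*}\tau_{x} \in E^{**}\) and \(\|(\Theta A)(x)\|_{E^{**}} \le \|A\| \|\tau_{x}\|_{\Lip(X)^{*}} \le \|A\|\), using \(\|\tau_{x}\|_{\Lip(X)^{*}} \le 1\). The Lipschitz estimate is driven by the observation that \(\|\tau_{x} - \tau_{x'}\|_{\Lip(X)^{*}} \le d(x, x')\), which is immediate from \(|g(x) - g(x')| \le \mathcal{L}_{X, \mathbb{K}}(g)\, d(x, x')\). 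Hence
\[
\|(\Theta A)(x) - (\Theta A)(x')\|_{E^{**}} = \|A^{*}(\tau_{x} - \tau_{x'})\|_{E^{**}} \le \|A\|\, d(x, x'),
\]
so that \(\mathcal{L}_{X, E^{**}}(\Theta A) \le \|A\|\) and therefore \(\|\Theta A\|_{\textrm{max}} \le \|A\|\). Linearity of \(\Theta\) is clear.

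Next I would build the inverse. Given \(F \in \Lip(X, E^{**})\), define \(A_{F} \colon E^{*} \rightarrow \Lip(X)\) by \((A_{F} e^{*})(x) \coloneqq (F(x))(e^{*})\). The bounds \(|(A_{F} e^{*})(x)| \le \|F\|_{C(X, E^{**})} \|e^{*}\|_{E^{*}}\) and \(|(A_{F} e^{*})(x) - (A_{F} e^{*})(x')| \le \mathcal{L}_{X, E^{**}}(F)\, d(x, x')\, \|e^{*}\|_{E^{*}}\) show that \(A_{F} e^{*} \in \Lip(X)\) with \(\|A_{F} e^{*}\|_{\textrm{max}} \le \|F\|_{\textrm{max}} \|e^{*}\|_{E^{*}}\), whence \(A_{F} \in \mathscr{B}(E^{*}, \Lip(X))\) and \(\|A_{F}\| \le \|F\|_{\textrm{max}}\). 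Comparing definitions directly gives \(\Theta A_{F} = F\) and \(A_{\Theta A} = A\), so \(\Theta\) is a linear bijection with \(F \mapsto A_{F}\) as its inverse.

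Finally, both \(\Theta\) and its inverse are norm-nonincreasing by the estimates above; since they are mutually inverse linear maps, each is forced to be an isometry, and \(\Theta\) is the desired isometric isomorphism. I do not expect a serious obstacle here, as this is the natural adjoint/transpose identification of operators with vector-valued functions. The only points needing genuine care are the norm estimate \(\|\tau_{x} - \tau_{x'}\|_{\Lip(X)^{*}} \le d(x, x')\), which yields the Lipschitz bound, and the bookkeeping confirming that \(\Theta\) and \(F \mapsto A_{F}\) are truly inverse to one another.
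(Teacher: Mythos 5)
Your map \(\Theta\) and its inverse \(F \mapsto A_{F}\) are exactly the map \(R\) and inverse used in the paper's proof, and your argument is correct; the only difference is that the paper verifies \(\|RA\|_{\textrm{max}} = \|A\|\) directly by interchanging the suprema over \(x\) (or over pairs \(x \neq x'\)) and over \(\|e^{*}\|_{E^{*}} \le 1\), whereas you establish the two one-sided estimates and let the mutual-inverse relation force equality. Both routes are sound, so this is essentially the same proof.
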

%%%%%%%%%%%%%%%%%%%%%%%%%%%%%% end lem

%%%%%%%%%%%%%%%%%%%%%%%%%%%%%% begin proof
\begin{proof}
	The mapping \(R \colon \mathscr{B}\left(E^{*}, \Lip(X)\right) \rightarrow \Lip (X , E^{**})\) is defined by 
	\begin{align}\label{eq:3-5-1}
	\left((R A) (x) \right)(e^{*}) \coloneqq \left( A e^{*} \right)(x) 
	\qquad 
	\left(A \in \mathscr{B}\left(E^{*}, \Lip(X)\right), x \in X, e^{*} \in E^{*}\right).
	\end{align}
	Then, 
	\begin{align} 
	\begin{split}
	\|R A\|_{C(X, E^{**})} 
	=\sup_{x \in X} 
	\sup_{\begin{subarray}{c} e^{*} \in E^{*} \\ \| e^{*} \|_{E^{*}} \le 1\end{subarray}} 
	\left|(A e^{*})(x)\right|
	=\sup_{\begin{subarray}{c} e^{*} \in E^{*} \\ \| e^{*} \|_{E^{*}} \le 1\end{subarray}} 
	\| A e^{*} \|_{C(X)}
	\end{split}
	\end{align}
	and 
	\begin{align} 
	\begin{split}
	\mathcal{L}_{X, E^{**}}(R A) 
	=\sup_{\begin{subarray}{c} x, x' \in X \\ x \neq x'\end{subarray}} 
	\sup_{\begin{subarray}{c} e^{*} \in E^{*} \\ \| e^{*} \|_{E^{*}} \le 1\end{subarray}} 
	\frac{\left| (A e^{*})(x) - (A e^{*})(x') \right|}{d_X (x, x')}
	=\sup_{\begin{subarray}{c} e^{*} \in E^{*} \\ \| e^{*} \|_{E^{*}} \le 1\end{subarray}} 
	\mathcal{L}_{X, \mathbb{K}} (A e^{*}). 
	\end{split}
	\end{align}
	Therefore, we have 
	\begin{align}
	\left\|R A\right\|_{\textrm{max}} 
	&=\max\left\{ \|R A\|_{C(X, E^{**})}, \mathcal{L}_{X, E^{**}}(R A) \right\} \\
	&=\max\left\{
	\sup_{\begin{subarray}{c} e^{*} \in E^{*} \\ \| e^{*} \|_{E^{*}} \le 1\end{subarray}} \|A e^{*}\|_{C(X)}, 
	\sup_{\begin{subarray}{c} e^{*} \in E^{*} \\ \| e^{*} \|_{E^{*}} \le 1\end{subarray}} 
	\mathcal{L}_{X, \mathbb{K}} (A e^{*}) \right\} \\ 
	&=\sup_{\begin{subarray}{c} e^{*} \in E^{*} \\ \| e^{*} \|_{E^{*}} \le 1\end{subarray}}
	\max\left\{ \| A e^{*} \|_{C(X)}, \mathcal{L}_{X, \mathbb{K}} (A e^{*}) \right\} 
	=\| A \|_{\mathscr{B} (E^{*}, \Lip(X))}
	\end{align}
	Hence, \(R\) is a linear isometry. 
	Let \(g \in \Lip (X, E^{**})\). 
	The mapping \(A \colon E^{*} \rightarrow \Lip(X)\) is defined by
	\[
	\left(A e^{*}\right) (x) = \left(g(x)\right)(e^{*}) \qquad (e^{*} \in E^{*}, x \in X).
	\]
	It is clear that  
	\(A\) is a linear operator. 
	Also, 
	we have
	\begin{align}\label{eq:3-5-2}
	\begin{split}
	\sup_{\begin{subarray}{c} e^{*} \in E^{*} \\ \| e^{*} \|_{E^{*}} \le 1\end{subarray}} \| A e^{*}\|_{C(X)} 
	&= \sup_{x \in X} \sup_{\begin{subarray}{c} e^{*} \in E^{*} \\ \| e^{*} \|_{E^{*}} \le 1\end{subarray}}
	\left|(A e^{*})(x)\right| 
	= \sup_{x \in X} \sup_{\begin{subarray}{c} e^{*} \in E^{*} \\ \| e^{*} \|_{E^{*}} \le 1\end{subarray}}
	\left|\left(g(x)\right)(e^{*})\right| \\
	&= \sup_{x \in X} \| g(x) \|_{E^{**}} 
	= \|g\|_{C(X,E^{**})}
	\end{split}
	\end{align}
	and
	\begin{align}\label{eq:3-5-3}
	\begin{split}
	\sup_{\begin{subarray}{c} e^{*} \in E^{*} \\ \| e^{*} \|_{E^{*}} \le 1\end{subarray}} 
	\mathcal{L}_{X, \mathbb{K}} (A e^{*}) 
	&= \sup_{\begin{subarray}{c} x, x' \in X \\ x \neq x'\end{subarray}} 
	\sup_{\begin{subarray}{c} e^{*} \in E^{*} \\ \| e^{*} \|_{E^{*}} \le 1\end{subarray}} 
	\frac{\left| (A e^{*})(x) - (A e^{*})(x') \right|}{d_X (x, x')}  \\
	&= \sup_{\begin{subarray}{c} x, x' \in X \\ x \neq x'\end{subarray}} 
	\sup_{\begin{subarray}{c} e^{*} \in E^{*} \\ \| e^{*} \|_{E^{*}} \le 1\end{subarray}} 
	\frac{\left| \left(g(x)\right)(e^{*}) - \left(g(x')\right)(e^{*}) \right|}{d_X (x, x')} \\
	&= \sup_{\begin{subarray}{c} x, x' \in X \\ x \neq x'\end{subarray}} 
	\frac{\left\| \left(g(x)\right) - \left(g(x')\right) \right\|_{E^{**}}}{d_X (x, x')} 
	=\mathcal{L}_{X, E^{**}} (g).
	\end{split}	
	\end{align}
	The equations \eqref{eq:3-5-2} and \eqref{eq:3-5-3} show that 
	\begin{align}
	\begin{split}
	\|A\|_{\mathscr{B}\left(E^{*}, \Lip(X)\right)} 
	&= \max
	\left\{
	\sup_{\begin{subarray}{c} e^{*} \in E^{*} \\ \| e^{*} \|_{E^{*}} \le 1\end{subarray}} 
	\| A e^{*}\|_{C(X)}, 
	\sup_{\begin{subarray}{c} e^{*} \in E^{*} \\ \| e^{*} \|_{E^{*}} \le 1\end{subarray}} 
	\mathcal{L}_{X, \mathbb{K}} (A e^{*})
	\right\} \\
	&= \max\left\{\|g\|_{C(X,E^{**})}, \mathcal{L}_{X, E^{**}} (g)\right\} = \|g\|_{\textrm{max}}
	\end{split}
	\end{align}
	Therefore, 
	we have 
	\(A \in \mathscr{B} \left(E^{*}, \Lip(X)\right)\). 
	Also, we have  
	\[
	\left((R A)(x)\right)(e^{*}) = \left( A e^{*} \right)(x) = \left(g(x)\right)(e^{*}) 
	\] 
	for any \(x \in X\) and \(e^{*} \in E^{*}\). 
	Hence, \(R\) is isometric isomorphism from \(\mathscr{B}\left(E^{*}, \Lip(X)\right)\) to \(\Lip(X, E^{**})\).
\end{proof}
%%%%%%%%%%%%%%%%%%%%%%%%%%%%%% end proof

%%%%%%%%%%%%%%%%%%%%%%%%%%%%%% begin proof
\begin{proof}[Proof of \Cref{thm1}] 
	Put 
	\begin{align}
	Q f^{**} \coloneqq R S T^{*} (U^{-1})^{**} f^{**} \qquad \left(f^{**} \in \lip (X, E)^{**}\right), 
	\end{align}
	where \(R\), \(S\), \(T\) and \(U\) are as in above lemmas.
	Since \(R\), \(S\), \(T^{*}\) and \((U^{-1})^{**}\) are isometric isomorphisms, 
	\(Q\) is isometric isomorphism from \(\lip (X, E)^{**}\) to \(\Lip (X, E^{**})\).
	Therefore, 
	\(\lip (X, E)^{**}\) is isometrically isomorphic to \(\Lip (X, E^{**})\).
	
	Next, 
	we compute \(Q f^{**}\) for any \(f^{**} \in \lip(X, E)^{**}\). 
	By using the equations \eqref{eq:3-4-1} and \eqref{eq:3-5-1}, 
	we have 
	\begin{align}\label{eq:1-1}
	\begin{split}
	\left( (Q f^{**})(x) \right)(e^{*}) 
	&= \left( (R S T^{*} (U^{-1})^{**} f^{**}) (x) \right)(e^{*}) 
	= \left( (S T^{*} (U^{-1})^{**} f^{**}) (e^{*}) \right)(x) \\
	&= \left(T^{*} (U^{-1})^{**} f^{**} \right)(e^{*} \otimes \tau_{x}) 
	= \left((U^{-1})^{**} f^{**} \right)\left( T (e^{*} \otimes \tau_{x}) \right) \\
	&= f^{**} \left( (U^{-1})^{*} T (e^{*} \otimes \tau_{x}) \right)
	\end{split}
	\end{align}
	for any \(f^{**} \in \lip (X, E)^{**}, x \in X\) and \(e^{*} \in E\).
	Here, 
	we compute \((U^{-1})^{*} T (e^{*} \otimes \tau_{x})\). 
	Let \(f \in \lip (X, E)\) be arbitrary. 
	Then there exists a sequence \(\{z_{k}\}_{k=1}^{\infty}\) in \(E \otimes_{\varepsilon} \lip (X)\) 
	such that 
	\begin{align}\label{eq:1-2}
	\| U z_{k} - f \|_{\textrm{max}} \rightarrow 0 \ (k \rightarrow \infty).
	\end{align}
	For each \(k = 1, 2, 3, \ldots\), 
	\(z_{k}\) is represented as follows:  
	\begin{align}
	z_{k} = \sum_{i=1}^{M_{k}} e_{i}^{(k)} \otimes h_{i}^{(k)}
	\end{align}
	with some \(e_{i}^{(k)} \in E\) and \(h_{i}^{(k)} \in \lip (X)\).
	Since \(U\) is an isometric isomorphism, 
	we have  
	\begin{align}\label{eq:1-3}
	\varepsilon(U^{-1}f -  z_{k})
	\rightarrow 0 \ (k \rightarrow \infty).
	\end{align}
	By using the equations \eqref{eq:2-2-1}, \eqref{eq:3-1-1}, \eqref{eq:3-2-1} and \eqref{eq:3-2-4}, 
	we get following estimate:  
	\begin{align}
	\begin{split}
	&\left|\left((U^{-1})^{*}T(e^{*} \otimes \tau_{x})\right)(f) - (e^{*} \otimes \tau_{x})(f)\right| \\ 
	\le &\left|\left(T (e^{*} \otimes \tau_{x} ) \right) (U^{-1} f) - \left(T(e^{*} \otimes \tau_{x}) \right)(z_{k})\right| 
	+ \left|\left(T (e^{*} \otimes \tau_{x})\right)(z_{k}) - (e^{*} \otimes \tau_{x} ) (f)\right| \\
	= &\left| \left(T(e^{*} \otimes \tau_{x}) \right)(U^{-1}f - z_{k})\right| 
	+ \left|\sum_{i=1}^{M_{k}}\left(e^{*} e_{i}^{(k)}\right)h_{i}^{(k)}(x) - e^{*}(f(x))\right| \\
	= &\left|\left( T (e^{*} \otimes \tau_{x}) \right)(U^{-1} f - z_{k})\right| 
	+ \left|e^{*}\left(\sum_{i=1}^{M_{k}} h_{i}^{(k)}(x)e_{i}^{(k)} - f(x)\right)\right| \\
	\le &\|T(e^{*} \otimes \tau_{x})\|_{\varepsilon^{*}} \varepsilon(U^{-1} f - z_{k}) 
	+ \|e^{*}\|_{E^{*}} \left\|\sum_{i=1}^{M_{k}}h_{i}^{(k)}(x)e_{i}^{(k)} - f(x) \right\|_{E} \\
	= &\pi(e^{*} \otimes \tau_{x}) \varepsilon(U^{-1} f - z_{k}) 
	+ \|e^{*}\|_{E^{*}} \left\|(U z_{k})(x) - f(x)\right\|_{E} \\ 
	\le &\|e^{*}\|_{E^{*}} \| \tau_{x} \|_{\lip(X)^{*}} \varepsilon(U^{-1} f - z_{k})
	+ \|e^{*}\|_{E^{*}} \| U z_{k}  - f \|_{\textrm{max}}. 
	\end{split}
	\end{align}
	By using \eqref{eq:1-2} and \eqref{eq:1-3}, 
	we have 
	\[
	\left( (U^{-1})^{*} T (e^{*} \otimes \tau_{x} ) \right) (f) = (e^{*} \otimes \tau_{x} ) (f).	
	\]
	Now \(f\) is arbitrary, 
	we have 
	\begin{align}\label{eq:1-4}
	(U^{-1})^{*} T (e^{*} \otimes \tau_{x} ) = e^{*} \otimes \tau_{x}.
	\end{align} 
	Therefore, 
	by combining \eqref{eq:1-1} and \eqref{eq:1-4}, 
	we have 
	\begin{align}
	\left( (Q f^{**})(x) \right)(e^{*}) = f^{**} (e^{*} \otimes \tau_{x}).
	\end{align}
\end{proof}
%%%%%%%%%%%%%%%%%%%%%%%%%%%%%% end proof
%%%%%%%%%%%%%%%%%%%%%%%%%%%%%%%%%%%%%%%%%%%%%%%%%%%%%%%%%%%%%% end PMT

%%%%%%%%%%%%%%%%%%%%%%%%%%%%%%%%%%%%%%%%%%%%%%%%%%%%%%%%%%%%%% begin example
\section{Examples}
In this section, 
we give several examples using the \Cref{thm1}. 

%%%%%%%%%%%%%%%%%%%%%%%%%%%%%% begin example1
\begin{ex}\label{Ex1}
	Suppose that \(0 < \alpha < 1\). 
	Then \(\lip_{\alpha} (X)\) separates points uniformly (see e.g., \cite[Lemma 3.3]{BCD}). 
	Therefore
	by \cite[Theorem 4.5]{Joh}, 
	\(\lip_{\alpha} (X)^{*}\) has the Radon-Nikod\'{y}m property. 
	It follows from \Cref{thm1} that 
	\(\lip_{\alpha}(X, E)^{**}\) is isometrically isomorphic to 
	\(\Lip_{\alpha}(X, E^{**})\) 
	if \(\lip_{\alpha}(X)^{*}\) or \(E^{*}\) has the approximation property. 
\end{ex}
%%%%%%%%%%%%%%%%%%%%%%%%%%%%%% end example1

\Cref{Ex1} implies that 
\Cref{thm1} is generalization of \cite[Theorem 5.13]{Joh}. 

%%%%%%%%%%%%%%%%%%%%%%%%%%%%%% begin example2
\begin{ex}
	Suppose that \(\lip (X)\) separates points uniformly. 
	Let \(K\) be a compact Hausdorff space. 
	\(C (K)\) denotes the Banach space of \(\mathbb{K}\)-valued continuous functions on \(K\). 
	Then \(C (K)^{*}\) has the approximation property (see e.g., \cite[p.74]{Rya}). 
	It follows from \Cref{thm1} that 
	\(\lip (X, C(K))^{**}\) is isometrically isomorphic to \(\Lip (X, C(K)^{**})\).
\end{ex}
%%%%%%%%%%%%%%%%%%%%%%%%%%%%%% end example2

%%%%%%%%%%%%%%%%%%%%%%%%%%%%%% begin example3
\begin{ex}
	Suppose that \(\lip (X)\) separates points uniformly. 
	Let \((\Omega, \mathfrak{M}, \mu)\) be a \(\sigma\)-finite measure space 
	and 
	let \(p \in [1, \infty]\). 
	Then \(L^{p}(\Omega)^{*}\) has the approximation property 
	(see e.g., \cite[Theorem 6.16]{Rud} and \cite[pp.73--74]{Rya}). 
	It follows from \Cref{thm1} that 
	\(\lip (X, L^{p}(\Omega))^{**}\) is isometrically isomorphic to 
	\(\Lip (X, L^{p}(\Omega)^{**})\). 
	By noting that \(L^{p}(\Omega)\) is reflexive for any \(p \in (1, \infty)\), 
	\(\lip (X, L^{p}(\Omega))^{**}\) is isometrically isomorphic to 
	\(\Lip (X, L^{p}(\Omega))\) for any \(p \in (1, \infty)\). 
\end{ex}
%%%%%%%%%%%%%%%%%%%%%%%%%%%%%% end example3

%%%%%%%%%%%%%%%%%%%%%%%%%%%%%% begin example4
\begin{ex}
	Suppose that \(\lip (X)\) separates points uniformly. 
	Let \(\mathscr{H}\) be an infinite-dimensional Hilbert space. 
	A. Szankowski showed in \cite{Sza} that 
	\(\mathscr{B} (\mathscr{H})\) does not have the approximation property. 
	Therefore, 
	by \cite[Corollary 4.7]{Rya}, 
	\(\mathscr{B} (\mathscr{H})^{*}\) does not have the approximation property. 
	On the other hand, 
	if \(\Lip (X)\) has the approximation property, 
	then so does \(\lip (X)^{*}\) by \Cref{ThmHW} and \cite[Corollary 4.7]{Rya}.
	It follows from \Cref{thm1} that 
	\(\lip (X, \mathscr{B} (\mathscr{H}))^{**}\)
	is isometrically isomorphic to \(\Lip (X, \mathscr{B} (\mathscr{H})^{**})\) 
	if \(\Lip (X)\) has the approximation property. 
\end{ex}
%%%%%%%%%%%%%%%%%%%%%%%%%%%%%% end example4
%%%%%%%%%%%%%%%%%%%%%%%%%%%%%%%%%%%%%%%%%%%%%%%%%%%%%%%%%%%%%% end example

%%%%%%%%%%%%%%%%%%%%%%%%%%%%%%%%%%%%%%%%%%%%%%%%%%%%%%%%%%%%%% begin bib

%%%%%%%%%%%%%%%%%%%%%%%%%%%%%%%%%%%%%%%%%%%%%%%%%%%%%%%%%%%%%% end bib

%%%%%%%%%%%%%%%%%%%%%%%%%%%%%%%%%%%%%%%%%%%%%%%%%%%%%%%%%%%%%%
\end{document}